\newcommand{\vtx}[1]{*+[o][F-]{\scriptscriptstyle #1}} 
\newcounter{num}[section] %
\newenvironment{theo}
{\refstepcounter{num}%
\bigskip\noindent{\bf Theorem~\arabic{section}.\arabic{num}. }\it}
\newenvironment{cor}
{\refstepcounter{num}%
\bigskip\noindent{\bf Corollary~\arabic{section}.\arabic{num}. }\it}
\newenvironment{lemma}
{\refstepcounter{num}%
\bigskip\noindent{\bf Lemma~\arabic{section}.\arabic{num}. }\it}
\newenvironment{conj}
{\refstepcounter{num}%
\bigskip\noindent{\bf Conjecture~\arabic{section}.\arabic{num}. }\it}
\newenvironment{remark}
{\refstepcounter{num}%
\bigskip\noindent{\bf Remark~\arabic{section}.\arabic{num}.}}
\newcommand{\definition}[1]
{\refstepcounter{num}%
\bigskip\noindent{\bf Definition~\arabic{section}.\arabic{num}}~({\it #1}).}
\newcommand{\defin}{\refstepcounter{num}%
\bigskip\noindent{\bf Definition~\arabic{section}.\arabic{num}.}}
\newcommand{\Ref}[1]{(\ref{#1})}
\newcounter{thepic}
\newenvironment{eq}{\begin{equation}}{\end{equation}}
\newcommand{\si}{\sigma}
\newcommand{\al}{\alpha}
\newcommand{\be}{\beta}
\newcommand{\ga}{\gamma}
\newcommand{\de}{\delta}
\newcommand{\ov}[1]{\overline{#1}}
\newcommand{\un}[1]{{\underline{#1}} }
\newcommand{\id}[1]{{{\rm id}\{{#1}\}}}
\newcommand{\tr}{\mathop{\rm tr}}
\newcommand{\mdeg}{\mathop{\rm mdeg}}
\newcommand{\Char}{\mathop{\rm char}}
\newcommand{\pwr}{{\mathop{\rm{pwr }}}}
\newcommand{\eqIII}{\asymp} 
\newcommand{\FF}{{\mathbb{F}}}   
\newcommand{\NN}{{\mathbb{N}}}
\newcommand{\RR}{{\mathbb{R}}}
\newcommand{\M}{\mathcal{M}}  
\begin{document}
\renewcommand{\refname}{References}
\thispagestyle{empty}

\title{On the nilpotency degree of the algebra with identity $x^n=0$.}%
\author{{Artem A. Lopatin}}%
\noindent\address{\noindent{}Artem A. Lopatin%
\newline\hphantom{iiii} Omsk Branch of
\newline\hphantom{iiii} Federal State Budgetary Establishment
\newline\hphantom{iiii} Sobolev Institute of Mathematics, SB RAS,
\newline\hphantom{iiii} Pevtsova street, 13,
\newline\hphantom{iiii} 644099, Omsk, Russia%
\newline\hphantom{iiii} http://www.iitam.omsk.net.ru/\~{}lopatin}%
\email{artem\underline{ }lopatin@yahoo.com}%

\vspace{1cm}
\maketitle {\small
\begin{quote}
\noindent{\sc Abstract. } Denote by $C_{n,d}$ the nilpotency degree of a relatively free algebra generated by $d$ elements and satisfying the identity $x^n=0$. Under assumption that the characteristic $p$ of the base field is greater than $n/2$, it is shown that $C_{n,d}<n^{\log_2(3d+2)+1}$ and $C_{n,d}<4\cdot 2^{\frac{n}{2}}d$. In particular, it is  established that the nilpotency degree $C_{n,d}$ has a polynomial growth in case the number of generators $d$ is fixed and $p>\frac{n}{2}$.  For $p\neq2$ the nilpotency degree  $C_{4,d}$ is described with deviation $3$ for all $d$. As an application, a finite generating set for the algebra $R^{GL(n)}$ of $GL(n)$-invariants of $d$ matrices is established in terms of $C_{n,d}$. Several conjectures are formulated.

\medskip

\noindent{\bf Keywords: } Nil-algebras, nilpotency degree, matrix invariants, generating set.

\noindent{\bf 2010 MSC: } 16R10, 16R30, 16N40.
\end{quote}
}

\section{Introduction}\label{section_intro}

We assume that $\FF$ is an infinite field of arbitrary characteristic $p=\Char{\FF}\geq0$. All vector spaces, algebras and modules are over $\FF$ and all algebras are associative with unity unless otherwise stated. 

We denote by $\M=\M(x_1,\ldots,x_d)$ the semigroup (without unity) freely generated by {\it letters}  $x_1,\ldots,x_d$ and denote by $\M_{\FF}=\M_{\FF}(x_1,\ldots,x_d)$ the vector space with the basis $\M$. Let 
$$N_{n,d}=N_{n,d}(x_1,\ldots,x_d)=\frac{\M_{\FF}}{\id{x^n\,|\,x\in\M_{\FF}}}$$%
be the relatively free algebra with the identity $x^n=0$. The connection between this algebra and analogues of the Burnside problems
for associative algebras suggested by Kurosh and Levitzky is discussed in recent survey~\cite{Zelmanov07} by Zelmanov.

We write 
$$C_{n,d}=\min\{c>0\,|\,a_1\cdots a_c=0 \text{ for all }a_1,\ldots,a_c\in N_{n,d}\}$$ 
for the {\it nilpotency} degree of $N_{n,d}$. Since $C_{1,d}=1$ and $C_{n,1}=n$, we assume that $n,d\geq 2$ unless otherwise stated. Obviously, $C_{n,d}$ depends only on $n$, $d$, and $p$.

We consider the following three cases:
\begin{enumerate}
\item[(a)] $p=0$;

\item[(b)] $0<p\leq n$;

\item[(c)] $p>n$.
\end{enumerate}
By the well-known Nagata--Higman Theorem (see~\cite{Nagata53} and~\cite{Higman56}), which at first was proved by Dubnov and Ivanov~\cite{Dubnov43} in 1943, $C_{n,d}< 2^n$ in cases~(a) and~(c).  As it was pointed out in~\cite{DKZ02}, $C_{n,d}\geq d$ in case~(b); in particular, $C_{n,d}\to \infty$ as $d\to \infty$.  Thus, the case~(b) is drastically different from cases~(a) and~(c). In 1974 Razmyslov~\cite{Razmyslov74} proved that $C_{n,d}\leq n^2$ in case~(a).  As about lower bounds on $C_{n,d}$, in 1975 Kuzmin~\cite{Kuzmin75} established that $C_{n,d}\geq \frac{1}{2}n(n+1)$ in cases~(a) and~(c) and conjectured that $C_{n,d}$ is actually equal to $\frac{1}{2}n(n+1)$ in these cases. A proof of the mentioned lower bound was reproduced in books~\cite{Drensky_book04} and~\cite{Belov_book05} (see page~341). Kuzmin's conjecture is still unproven apart from some partial cases. Namely, the conjecture holds for $n=2$ and $n=3$ (for example, see~\cite{Lopatin_Comm1}). In case~(a) the conjecture was proved for $n=4$ by Vaughan--Lee~\cite{Vaughan93} and for $n=5$, $d=2$ by Shestakov and  Zhukavets~\cite{Shestakov04}. 

Using approach by Belov~\cite{Belov92}, Klein~\cite{Klein00} obtained that for an arbitrary characteristic the inequalities $C_{n,d}<\frac{1}{6}n^6 d^n$ and $C_{n,d}<\frac{1}{(m-1)!}n^{n^3} d^{m}$ hold, where $m=[n/2]$. Here $[a]$ (where $a\in\RR$) stands for the largest integer $b<a$. Recently, Belov and Kharitonov~\cite{Belov2011} established that $C_{n,d}\leq	2^{18}\cdot n^{12\log_3(n)+28}d$ (see Remark~\ref{remark_Belov} for more details). Moreover, they proved that a similar estimation also holds for the Shirshov Height of a finitely generated PI-algebra. We can summarize the above mentioned bounds on the nilpotency degree as follows:
\begin{enumerate}
\item[$\bullet$] if $p=0$, then $\frac{1}{2}n(n+1)\leq C_{n,d}\leq n^2$;

\item[$\bullet$] if $0<p\leq n$, then $d\leq C_{n,d}<\frac{1}{6}n^6 d^n$ and $C_{n,d}\leq 2^{18}\cdot n^{12\log_3(n)+28}d$;

\item[$\bullet$] if $p>n$, then $\frac{1}{2}n(n+1)\leq C_{n,d}< 2^n$.
\end{enumerate}
For $d>0$ and arbitrary characteristic of the field the nilpotency degree $C_{n,d}$ is known for $n=2$ (for example, see~\cite{DKZ02}) and $n=3$ (see~\cite{Lopatin_Comm1} and~\cite{Lopatin_Comm2}):
$$
C_{2,d}=\left\{
\begin{array}{rl}
3,&\text{if } p=0 \text{ or }p>2\\
d+1,&\text{if } p=2 \\
\end{array}
\right.
\;\text{ and }\;
C_{3,d}=\left\{
\begin{array}{rl}
6,&\text{if } p=0 \text{ or }p>3\\
6,&\text{if } p=2 \text{ and }d=2\\
d+3,&\text{if } p=2 \text{ and } d>2\\
3d+1,&\text{if }p=3.\\
\end{array}
\right..
$$

In this paper we obtained the following upper bounds on $C_{n,d}$: 
\begin{enumerate}
\item[$\bullet$] $C_{n,d}<n^{\log_2(3d+2)+1}$ in case $p>\frac{n}{2}$ (see Corollary~\ref{cor_poly}). Therefore, we establish a polynomial upper bound on $C_{n,d}$ under assumption that the number of generators $d$ is fixed.  

\item[$\bullet$] $C_{n,d}<4\cdot 2^{\frac{n}{2}}d$ for $\frac{n}{2}<p\leq n$ (see Corollary~\ref{cor1}). Modulo Conjecture~\ref{conj_n2}, we prove that $C_{n,d}<n^2 \ln(n) d$ for $\frac{n}{2}<p\leq n$ (see Corollary~\ref{cor_modulo_conj}). 

\item[$\bullet$]  $C_{4,d}$ is described with deviation $3$ for all $d$ under assumption that $p\neq2$ (see Theorem~\ref{theo_n_is_4}).
\end{enumerate}%
\noindent{}Note that even in the partial case of $p>n$ and $d=2$ a polynomial bound on $C_{n,d}$ has not been known. If $n$ is fixed and $d$ is large enough, then the bound from Corollary~\ref{cor1} is better than that from Corollary~\ref{cor_poly}. In Remark~\ref{remark_Belov} we show that for $p>\frac{n}{2}$, $4\leq n\leq 2000$, and all $d$ the bound from Corollary~\ref{cor1} is at least $10^{20}$ times better than the bounds by Belov and Kharitonov~\cite{Belov2011}.

As an application, we consider the algebra $R^{GL(n)}$ of $GL(n)$-invariants of several matrices and describe a finite generating set for $R^{GL(n)}$ in terms of $C_{n,d}$ (see Theorem~\ref{theo_gen_system}). We conjecture that $R^{GL(n)}$ is actually generated by its elements of degree less or equal to $C_{n,d}$ (see Conjecture~\ref{conj_inv}).

The paper is organized as follows. In Section~\ref{section_main_theo} we establish a key recursive formula for an upper bound on $C_{n,d}$ that holds in case $p=0$ or $p>\frac{n}{2}$ (see Theorem~\ref{theo_main}): 
\begin{eq}\label{eq_main}
C_{n,d}\leq d\sum_{i=2}^n (i-1) C_{\left[n/i\right],d}+1.
\end{eq}%
The main idea of proof of Theorem~\ref{theo_main} is the following one. We introduce some partial order $>$ on $\M$ and the $\eqIII$-equivalence on $\M_{\FF}$ in such a way that $f \eqIII h$ if and only if the image of $f-h$ in $N_{n,d}$ belongs to $\FF$-span of elements that are bigger than $f-h$ with respect to $>$. Since $N_{n,d}$ is homogeneous with respect to degrees, there exists a $w\in\M$ satisfying $w\not\eqIII 0$ and $C_{n,d}=\deg{w}+1$. Thus we can deal with the $\eqIII$-equivalence instead of the equality in $N_{n,d}$. Some relations of $N_{n,d}$ modulo $\eqIII$-equivalence resembles relations of $N_{k,d}$ for $k < n$ (see formula~\Ref{eq0}). This fact allows us to obtain the upper bound on $C_{n,d}$ in terms of $C_{k,d}$, where $k< n$. To illustrate the proof of Theorem~\ref{theo_main}, in Example~\ref{ex_n5} we consider the partial case of $n=5$ and $p\neq 2$. Note that a similar approach to the problem of description of $C_{n,d}$ can be originated from every partial order on $\M$.

In Section~\ref{section_poly} we apply recursive formula~\Ref{eq_main} several times to obtain the polynomial bound from Corollary~\ref{cor_poly}. On the other hand, in Section~\ref{section_cor} we use formula~\Ref{eq_main} together with the Nagata--Higman Theorem to establish Corollary~\ref{cor1}. Formula~\Ref{eq_main} is applied to the partial case of $n\leq9$ in Corollary~\ref{cor1a}.   

In Section~\ref{section_n_is_4} we develop the approach from Section~\ref{section_main_theo} for $n=4$ to prove Theorem~\ref{theo_n_is_4}. We define a new partial order $\succ$ on $\M$, which is weaker than $>$, and obtain a new $\approx$-equivalence on $\M_{\FF}$, which is stronger than $\eqIII$-equivalence. Considering relations of $N_{4,d}$ modulo $\approx$-equivalence, we obtain the required bounds on $C_{4,d}$. 

Section~\ref{section_matrix_inv} is dedicated to the algebras of invariants of several matrices.

We end up this section with the following optimistic conjecture, which follows from Kuzmin's conjecture. We write $C_{n,d,p}$ for $C_{n,d}$.  

\begin{conj}\label{conj_optimistic}
For all $p>n$ we have $C_{n,d,0}=C_{n,d,p}$.
\end{conj}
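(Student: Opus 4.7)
The plan is to reduce the conjecture to Kuzmin's conjecture, as hinted by the author. Kuzmin's conjecture predicts that $C_{n,d,p}=\tfrac{1}{2}n(n+1)$ in both case~(a), $p=0$, and case~(c), $p>n$. Granting this, both $C_{n,d,0}$ and $C_{n,d,p}$ collapse to the common value $\tfrac{1}{2}n(n+1)$ for every $p>n$, so the conjecture is immediate. This is the shortest deduction and presumably the one the author has in mind when writing ``follows from Kuzmin's conjecture''.

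For a direct approach that avoids Kuzmin's conjecture, I would pursue a base-change argument. For $p=0$ or $p>n$ the integer $n!$ is invertible in $\FF$, so the T-ideal generated by $x^n$ coincides with the T-ideal generated by its full multilinearization $m(x_1,\ldots,x_n)=\sum_{\sigma\in S_n} x_{\sigma(1)}\cdots x_{\sigma(n)}$, whose coefficients are $\pm1$. Form the integral model
\[
B:=\mathbb{Z}[1/n!]\langle x_1,\ldots,x_d\rangle / J,
\]
where $J$ is the T-ideal generated by $m$; a standard check using the multilinearity of $m$ yields $N_{n,d,\FF}\cong B\otimes_{\mathbb{Z}[1/n!]}\FF$ for every such $\FF$. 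Each homogeneous component $B_c$ is a finitely generated $\mathbb{Z}[1/n!]$-module, so it decomposes as $\mathbb{Z}[1/n!]^{r_c}\oplus T_c$ with $T_c$ a finite torsion module whose annihilator factors into primes greater than $n$. Setting $c_0:=C_{n,d,0}$, one verifies
\[
c_0=\min\{c:r_c=0\},\qquad C_{n,d,p}=\min\{c:r_c=0\text{ and }T_c\otimes\FF_p=0\}\geq c_0.
\]

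Thus the equality $C_{n,d,0}=C_{n,d,p}$ reduces to proving that $T_{c_0}$ has no $p$-torsion for every prime $p>n$, equivalently that $B_{c_0}=0$ already over $\mathbb{Z}[1/n!]$. This is the main obstacle: it asserts that the ``critical'' graded piece of the integral model vanishes integrally rather than merely rationally, which is a delicate divisibility property of the coefficients arising in rewriting long words via the polarized identity $m=0$. Without new structural input this seems no easier than Kuzmin's conjecture itself, which is presumably why the author frames the statement as optimistic and proposes it only conditionally rather than attempting a direct proof.
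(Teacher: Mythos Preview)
The statement in question is an open \emph{conjecture}; the paper does not prove it. The only justification the paper offers is the parenthetical remark that it ``follows from Kuzmin's conjecture'' together with the observation that it holds for $n=2,3$. Your first paragraph reproduces exactly this deduction: if Kuzmin's conjecture is true, then $C_{n,d,0}=C_{n,d,p}=\tfrac{1}{2}n(n+1)$ for every $p>n$. That is all the paper contains on this point, and you have matched it.

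Your second and third paragraphs go well beyond the paper. The base-change reformulation over $\mathbb{Z}[1/n!]$ is correct and illuminating: the inequality $C_{n,d,p}\geq C_{n,d,0}$ does follow from the fact that a nonzero free rank in $B_c$ survives reduction modulo any prime, and the conjecture (for all $p>n$ simultaneously) is indeed equivalent to the vanishing of the torsion module $T_{c_0}$. But, as you yourself note, this reformulation does not resolve anything---showing that $B_{c_0}$ is already zero over $\mathbb{Z}[1/n!]$ is precisely the hard integral statement that remains open. So the extra material is a sound reformulation, not a proof, and the paper makes no attempt in this direction.

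In short: there is no proof in the paper to compare against; your conditional deduction from Kuzmin's conjecture is the same one-line observation the author makes, and your further analysis is a correct but inconclusive elaboration that the paper does not pursue.
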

\bigskip%

\noindent{}This conjecture holds for $n=2,3$ (see above). Note that Conjecture~\ref{conj_n2} follows from Conjecture~\ref{conj_optimistic} by the above mentioned result by Razmyslov.

\section{Recursive upper bound}\label{section_main_theo}

We start with some notations. Let $\NN=\{1,2,\ldots\}$, $\NN_0=\NN\sqcup\{0\}$,  and $\FF^{\ast}=\FF\backslash \{0\}$. Denote $\M_1=\M\sqcup\{1\}$, i.e., we endow $\M$ with the unity. Given a letter $x$, denote by $\M^{\neg x}$ the set of words $a_1\cdots a_r\in\M$ such that neither letter $a_1$ nor letter $a_r$ is equal to $x$ and $r>0$.

For $a\in\M_1$ and a letter $x$ we denote by $\deg_x(a)$ the degree of $a$ in the letter $x$ and by $\mdeg(a)=(\deg_{x_1}(a),\ldots,\deg_{x_r}(a))$ the multidegree of $a$. For short, we write $1^r$ for $(1,\ldots,1)$ ($r$ times) and say that $a$ is {\it multilinear} in case $\mdeg(a)=1^r$.

Given $\un{\al}=(\al_1,\ldots,\al_r)\in\NN_{0}^r$, we set $\#\un{\al}=r$, $|\un{\al}|=\al_1+\cdots+\al_r$, and $\un{\al}^{\rm ord}=(\al_{\si(1)},\ldots,\al_{\si(r)})$ for a permutation $\si\in S_r$ such that $\al_{\si(1)}\geq\cdots\geq\al_{\si(r)}$. If $r=0$, then we say that $\un{\al}$ is an empty vector and write $\un{\al}=\emptyset$. Note that for $\un{\al}=\emptyset$ we also have $\un{\al}^{\rm ord}=\emptyset$.

Given $\un{\theta}\in\NN_{0}^r$ with $|\un{\theta}|=n$ and $a_1,\ldots,a_r\in\M$, denote by $T_{\un{\theta}}(a_1,\ldots,a_r)$ the coefficient of $\al_1^{\theta_1}\cdots \al_r^{\theta_r}$ in $(\al_1 a_1 +\cdots + \al_r a_r)^n$, where $\al_i\in\FF$. Since the field $\FF$ is infinite, standard Vandermonde arguments give that $T_{\un{\theta}}(a_1,\ldots,a_r)=0$ holds in $N_{n,d}$.


\definition{of $\pwr_x(a)$} Let $x$ be a letter and $a=a_1 x^{\al_1}\cdots a_r x^{\al_r} a_{r+1}\in\M$, where $r\geq 0$, $a_1,a_{r+1}\in\M_1$, $a_2,\ldots, a_r\in\M$, $\al_1,\ldots,\al_r>0$, and $\deg_x(a_i)=0$ for all $i$. Then we denote by $\pwr_x(a)=(\al_1,\ldots,\al_r)$ the {\it $x$-power} of $a$. In particular, if $\deg_x(a)=0$, then $\pwr_x(a)=\emptyset$.
\bigskip

Let $\un{\al}\in\NN^r$, $\un{\be}\in\NN^s$ ($r,s\geq0$) satisfy $\un{\al}=\un{\al}^{\rm ord}$ and $\un{\be}=\un{\be}^{\rm ord}$. Then we write $\un{\al}>\un{\be}$ if one of the following conditions holds:
\begin{enumerate}
\item[$\bullet$] $r<s$;

\item[$\bullet$] $r=s$ and $\al_1=\be_1,\ldots,\al_l=\be_l$, $\al_{l+1}>\be_{l+1}$ for some $0\leq l< r$.
\end{enumerate}
As an example, $(2,2,2)<(3,2,1)<(4,1,1)<(3,3)<(4,2)<(5,1)<(6)<\emptyset$.

\defin\label{def_order} Let $x$ be a letter and $a,b\in\M$. Introduce the partial order $>$ and the $\gtrless$-equivalence on $\M$ as follows: 
\begin{enumerate}
\item[$\bullet$] $a>b$ if and only if $\pwr_x(a)^{\rm ord}>\pwr_x(b)^{\rm ord}$ for some letter $x$ and $\pwr_y(a)^{\rm ord}\geq\pwr_y(b)^{\rm ord}$ for every letter $y$;

\item[$\bullet$] $a\gtrless b$ if and only if $\pwr_y(a)^{\rm ord}=\pwr_y(b)^{\rm ord}$ for every letter $y$; in particular, $\mdeg{a}=\mdeg{b}$. 
\end{enumerate}

\begin{remark}\label{remark1}
There is no an infinite chain $a_1<a_2<\cdots$ such that $a_i\in\M$ and $\deg(a_i)=\deg(a_j)$ for all $i,j$.
\end{remark}

\definition{of the $\eqIII$-equivalence}\label{def_eqIII} 
\begin{enumerate}
\item[1.] Let $f=\sum_i\al_i a_i\in\M_{\FF}$, where $\al_i\in\FF^{\ast}$, $a_i\in\M$, and $a_i\gtrless a_{i'}$ for all $i,i'$.  Then $f\eqIII 0$ if $f=0$ in $N_{n,d}$ or $f=\sum_j\be_j b_j$ in $N_{n,d}$ for some $\be_j\in\FF^{\ast}$, $b_j\in\M$ satisfying $b_j>a_i$ for all $i,j$.

\item[2.] If $f=\sum_k f_k\in\M_{\FF}$ and $f_k\eqIII 0$ satisfies conditions from part~1 for all $k$, then $f\eqIII 0$. 
\end{enumerate}
Given $h\in\M_{\FF}$, we write $f\eqIII h$ if $f-h\eqIII 0$.
\bigskip

It is not difficult to see that $\eqIII$ is actually an equivalence on the vector space $\M_{\FF}$, i.e., $\eqIII$ have properties of transitivity and linearity over $\FF$. Note that part~2 of Definition~\ref{def_eqIII} is necessary for $\eqIII$ to be an equivalence.

\begin{theo}\label{theo_main}
Let $p=0$ or $p>\frac{n}{2}$. Then 
$$C_{n,d}\leq d\sum_{i=2}^n (i-1) C_{\left[n/i\right],d}+1.$$
\end{theo}
\begin{proof} There exists a $w\in\M$ with $\deg(w)=C_{n,d}-1$ and $w\neq 0$ in $N_{n,d}$. Moreover, by Remark~\ref{remark1} and $\NN$-homogeneity of $N_{n,d}$ we can assume that $w\not\eqIII 0$. Given a letter $x$, we write $d(x^i)$ for the number of $i^{\rm th}$ in the $x$-power of $w$, i.e.,%
$$\pwr_x(w)^{\rm ord} = (\al_1,\ldots,\al_r,\underbrace{i,\ldots,i}_{d(x^i)},\be_1,\ldots,\be_s),$$
where $\al_r<i<\be_1$. Obviously, $d(x^i)=0$ for $i\geq n$. 

Let $2\leq i\leq n$ and $x$ be a letter. Then $n=k i+r$ for $k=[n/i]\geq1$ and $0\leq r<i$. Consider elements $a_1,\ldots,a_k\in\M^{\neg x}$ and $\un{\theta}=((i-1)k+r,1^k)$. Note that for $a_{\si}=x^{i-1}a_{\si(1)}\cdots x^{i-1} a_{\si(k)}x^{i-1}$, $\si\in S_k$, the following statements hold:
\begin{enumerate} 
\item[$\bullet$] $a_{\si}\gtrless a_{\tau}$ for all $\si,\tau\in S_k$.

\item[$\bullet$] Let $i_1,\ldots,i_s>0$ satisfy $i_1+\cdots+i_s=(i-1)(k+1)$ and $e_0,\ldots,e_s\in\M_1$ be such products of $a_1,\ldots,a_k$ that for every $1\leq j\leq k$ $a_j$ is a factor of one and only element from the set $\{e_0,\ldots,e_s\}$. Moreover, we assume that $e_1,\ldots,e_{s-1}\in\M$. Define $e=e_0 x^{i_1}e_1 x^{i_2}\cdots x^{i_s}e_s\neq a_{\si}$ for all $\si\in S_k$. Then $e> a_{\si}$ for all $\si\in S_k$. 
\end{enumerate}%
\noindent{}To prove the second claim, we notice that there are two cases. Namely, in the first case $s=k+1$, $e_0=e_{k+1}=1$, and $e_1=a_{\tau(1)},\ldots,e_k=a_{\tau(k)}$ for some $\tau\in S_k$; and in the second case $\#\pwr_x(e)<\#\pwr_x(a_\si)$ for all $\si\in S_k$. In both cases we have $\pwr_x(e)^{\rm ord}>\pwr_x(a_\si)^{\rm ord}$ and $\pwr_y(e)^{\rm ord}\geq\pwr_y(a_\si)^{\rm ord}$ for any letter $y\neq x$ and any $\si\in S_k$. The claim is proven.    

Since $T_{\un{\theta}}(x,a_1,\ldots,a_k)x^{i-r-1}=0$ in $N_{n,d}$, we have 
$\sum_{\si\in S_k} a_{\si}\eqIII 0$. Moreover,
\begin{eq}\label{eq0}
\sum_{\si\in S_k} v\, a_{\si}w\eqIII 0
\end{eq}%
for all $v,w\in\M_1$ such that if $v\neq 1$ ($w\neq 1$, respectively), then its last (first, respectively) letter is not $x$.

Let $D=2^k-1$. Since $p=0$ or $p>\frac{n}{2}\geq k$, the Nagata--Higman Theorem implies that $C_{k,D}\leq 2^k-1$. For short, we write $C$ for $C_{k,D}$. Thus $y_1\cdots y_C=0$ in $N_{k,D}(y_1,\ldots,y_D)$, where $y_1,\ldots,y_D$ are new letters.  
Since $y_1\cdots y_C$ is multilinear, an equality \begin{eq}\label{eq1}
y_1\cdots y_C=\sum_{\un{u}} \al_{\un{u}}\, u_0 T_{1^k}(u_1,\ldots,u_k)u_{k+1}
\end{eq}%
holds in $\M_{\FF}(y_1,\ldots,y_C)$, where the sum ranges over $(k+2)$-tuples $\un{u}=(u_0,\ldots,u_{k+1})$ such that $u_0,u_{k+1}\in\M_1(y_1,\ldots,y_C)$, $u_1,\ldots,u_k\in\M(y_1,\ldots,y_C)$, and the number of non-zero coefficients $\al_{\un{u}}\in\FF$ is finite.  

Given $b_1,\ldots,b_C\in\M^{\neg x}$ and $0\leq l\leq k+1$, denote by $v_l\in\M_1$ the result of substitution $y_j\to x^{i-1}b_j$ ($1\leq j\leq C$) in $u_l$. We apply these substitutions to equality~\Ref{eq1} and multiply the result by $x^{i-1}$. Thus,
$$x^{i-1}b_1\cdots x^{i-1}b_C x^{i-1}=\sum_{\un{u}} \al_{\un{u}}\,v_0 T_{1^k}(v_1,\ldots,v_k)v_{k+1} x^{i-1}$$
in $\M_{\FF}=\M_{\FF}(x_1,\ldots,x_d)$. For every $\un{u}$ there exist $a_1,\ldots,a_k\in\M^{\neg x}$ satisfying $v_l=x^{i-1}a_l$ for all $1\leq l\leq k$. If $u_{k+1}\neq1$, then we also have $v_{k+1}=x^{i-1}a_{k+1}$ for some $a_{k+1}\in\M^{\neg x}$. Since $T_{1^k}(v_1,\ldots,v_k)=\sum_{\si\in S_k} v_{\si(1)}\cdots v_{\si(k)}$, we have 
$$T_{1^k}(v_1,\ldots,v_k)v_{k+1}x^{i-1}=\sum_{\si\in S_k} a_{\si} f,$$%
where $f$ stands for $1$ in case $u_{k+1}=1$ and for $a_{k+1}x^{i-1}$ in case $u_{k+1}\neq1$.  Combining the previous two equalities with equivalence~\Ref{eq0}, we obtain 
\begin{eq}\label{eq2}
x^{i-1}b_1\cdots x^{i-1}b_C x^{i-1}\eqIII 0.
\end{eq}%
Hence, the equivalence $b_0 x^{i-1}b_1\cdots x^{i-1} b_{C+1}\eqIII 0$ holds for all $b_1,\ldots,b_{C}\in\M^{\neg x}$ and $b_0,b_{C+1}\in\M_1$ such that if $b_0\neq 1$ ($b_{C+1}\neq 1$, respectively), then its last (first, respectively) letter is not $x$. Since $w\not\eqIII 0$, we obtain 
$$d(x^{i-1})\leq C_{[n/i],d},$$ 
and therefore $\deg_x(w)\leq \sum_{1<i\leq n} (i-1)\, C_{[n/i],d}$ for every letter $x$. The proof is completed. 
\end{proof}

\begin{remark}\label{remark_main} Since $C_{1,d}=1$, we can reformulate the statement of Theorem~\ref{theo_main} as follows. Let $p=0$ or $p>\frac{n}{2}$ and $m=[n/2]$. Then  $C_{n,d}\leq A_n d + 1$, where 
$$A_n=\sum_{i=2}^m (i-1)C_{[n/i],d} + \frac{1}{2}(n+m-1)(n-m).$$
\end{remark}

\example\label{ex_n5}
To illustrate the proof of Theorem~\ref{theo_main}, we repeat this proof in the partial case of $n=5$ and $p\neq 2$. We write $a,b,c$ for some elements from $\M^{\neg x}$.

Let $i=2$. Then $k=[n/i]=2$ and $r=1$. Since $T_{311}(x,a,b)=0$ in $N_{5,d}$, we have the following partial case of~\Ref{eq0}:
\begin{eq}\label{eq_n5}
xaxbx + xbxax \eqIII 0.
\end{eq}%
Note that $C_{2,D}=3$ for all $D\geq2$. We rewrite the proof of this fact, using formula~\Ref{eq_n5} instead of the equality $uv+vu=0$ in $N_{2,D}$:
$$x a x \cdot b x c \cdot x \eqIII %
- x b ( x c x a x )\eqIII %
  (x b x a x ) c x \eqIII %
- x a x b x c x.$$
Here we use dots and parentheses to show how we apply~\Ref{eq_n5}. Thus we obtain the partial case of formula~\Ref{eq2}: $x a x b x c x \eqIII 0$. 
Therefore, $d(x)\leq 3$.

Let $i=3$. Then $k=[n/i]=1$ and $r=2$. Since $T_{41}(x,a)=0$ in $N_{5,d}$, we have $x^2 a x^2 \eqIII 0$. Considering $i=4,5$, we can see that $x^3 a x^3 \eqIII 0$ and $x^4 a x^4 \eqIII 0$. Thus, $d(x^j)\leq C_{1,D}=1$ for $j=2,3,4$.

The obtained restrictions on $d(x^j)$ for $1\leq j\leq 4$ imply that $\deg{w}\leq 12d$. Hence, $C_{5,d}\leq 12d+1$.

\section{Polynomial bound}\label{section_poly}

This section is dedicated to the proof of the next result.

\begin{cor}\label{cor_poly}
If $p>\frac{n}{2}$, then $C_{n,d}<n^{\log_2(3d+2)+1}$.
\end{cor}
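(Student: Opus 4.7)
I would proceed by strong induction on $n\geq 1$, establishing $C_{n,d}\leq n^\alpha$ with $\alpha=\log_2(3d+2)+1$, strictly whenever $n\geq 2$. The base case $n=1$ is immediate from $C_{1,d}=1=1^\alpha$. For the inductive step, apply the recursive bound~\Ref{eq_main} and substitute $C_{[n/i],d}\leq(n/i)^\alpha$ into every summand: this is immediate from the induction hypothesis when $[n/i]\geq 1$, and holds at the edge $i=n$ since $(n/n)^\alpha=1$ dominates the contribution $C_{[n/n],d}\leq 1$ extracted in the proof of Theorem~\ref{theo_main} (cf.\ Example~\ref{ex_n5}). The inequality becomes
$$C_{n,d}\leq dn^\alpha\sum_{i=2}^n\frac{i-1}{i^\alpha}+1,$$
so it suffices to bound $d\sum_{i=2}^{\infty}(i-1)/i^\alpha$ by a constant strictly less than $1$, with enough slack to absorb the trailing $+1$.

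The heart of the proof is the series estimate. I would isolate the leading $i=2$ term, which equals exactly $1/2^\alpha=1/(6d+4)$, and bound the tail by integral comparison:
$$\sum_{i\geq 3}\frac{i-1}{i^\alpha}\leq\sum_{i\geq 3}\frac{1}{i^{\alpha-1}}\leq\int_2^{\infty}\frac{dx}{x^{\alpha-1}}=\frac{2}{(\alpha-2)(3d+2)}.$$
Combining the two contributions,
$$d\sum_{i=2}^{\infty}\frac{i-1}{i^\alpha}\leq\frac{d}{3d+2}\Bigl(\frac{1}{2}+\frac{2}{\alpha-2}\Bigr).$$
Since $d\geq 2$ forces $\alpha\geq 4$, the parenthesized factor is at most $3/2$, while $d/(3d+2)<1/3$; hence the whole expression is $<1/2$. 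Plugging back yields $C_{n,d}\leq\tfrac12 n^\alpha+1<n^\alpha$ whenever $n^\alpha>2$, which holds for every $n\geq 2$ since $2^\alpha=6d+4\geq 16$.

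The delicate point is the calibration of the exponent to the series: the identity $2^{\alpha-1}=3d+2$ is exactly what pushes the dominant contribution $d\cdot 2^{-\alpha}=d/(6d+4)$ just below $1/6$, leaving enough slack for the tail integral to keep the whole sum comfortably under $1$. Any smaller choice of $\alpha$ would fail to close the induction, so matching $\alpha$ to the series is the real obstacle; the boundary interpretation at $i=n$ and the verification of the base case are essentially cosmetic.
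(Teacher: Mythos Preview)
Your argument is correct, and it is genuinely different from the paper's. You prove $C_{n,d}\le n^{\alpha}$ with $\alpha=\log_2(3d+2)+1$ directly by strong induction on $n$, applying Theorem~\ref{theo_main} once at $n$ and feeding in the induction hypothesis $C_{[n/i],d}\le (n/i)^{\alpha}$; the resulting series $d\sum_{i\ge 2}(i-1)i^{-\alpha}$ is then bounded below $\tfrac12$ by splitting off the $i=2$ term and estimating the tail via $\sum_{i\ge3}i^{-(\alpha-1)}\le\int_2^\infty x^{-(\alpha-1)}\,dx$. The identity $2^{\alpha-1}=3d+2$ makes both pieces proportional to $1/(3d+2)$, and the assumption $d\ge 2$ gives $\alpha\ge 4$, which is exactly what you need. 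The paper, by contrast, first coarsens~\Ref{eq_main} into dyadic blocks to obtain~\Ref{eq_key}, then \emph{unfolds} that recursion completely via an auxiliary tree $T_k$ whose maximal paths index the iterated terms; summing the weights of these paths with the combinatorial identity~\Ref{eq_claim} and the binomial theorem produces the closed form $C_{n,d}<4^k(1+\tfrac{3d}{2})^k$, which then yields the bound since $2^k\le n$. Your route is more elementary and avoids the tree combinatorics entirely; the paper's route trades simplicity for a more explicit picture of how the recursion propagates, but both arrive at the same exponent because the calibration $2^{\alpha-1}=3d+2$ is forced by the dominant $i=2$ (equivalently $j=1$) contribution in either setup.

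One cosmetic point: your discussion of the ``edge $i=n$'' is unnecessary. With the standard floor used throughout the paper (cf.\ Remark~\ref{remark_main} and Example~\ref{ex_n5}), $[n/n]=1$ and $C_{1,d}=1=(n/n)^{\alpha}$, so this term falls under the induction hypothesis like every other.
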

\bigskip

Theorem~\ref{theo_main} together with the inequality $C_{j-1,d}\leq C_{j,d}$ for all $j\geq 2$ implies that 
$$C_{n,d}\leq d\sum_{j=1}^k \ga_j\, C_{\left[n/2^j\right],d} + 1$$
for $\ga_j=(2^j-1)+2^j+\cdots+(2^{j+1}-2)=3(2^{j}-1)2^{j-1}$ and 
$k>0$ satisfying $1\leq \frac{n}{2^k}<2$. Thus,
\begin{eq}\label{eq_key}
C_{n,d}<\frac{3d}{2}\sum_{j=1}^k 4^j\, C_{\left[n/2^j\right],d},
\end{eq}%
where $\frac{n}{2}<2^k\leq n$. 

Let us fix some notations. If $a$ is an arrow in an oriented graph, then we denote the head of $a$ by $a'$ and the tail of $a$ by $a''$, i.e.,
$$\vcenter{
\xymatrix@C=1cm@R=1cm{ %
\vtx{a'}\ar@/^/@{<-}[rr]^{a} && \vtx{a''}\\
}} \quad.
$$
We say that $a''$ is a {\it predecessor} of $a'$ and $a'$ is a {\it successor} of $a''$. 

For every $l\geq1$ we construct an oriented tree $T_l$ as follows. 
\begin{enumerate}
\item[$\bullet$] The underlying graph of $T_l$ is a tree.

\item[$\bullet$] Vertices of $T_l$ are marked with $0,\ldots,l$.

\item[$\bullet$] Let a vertex $v$ be marked with $i$. Then $v$ has exactly $i$ successors, marked with $0,1,\ldots,i-1$. If $i<l$, then $v$ has exactly one predecessor. If $i=l$, then $v$ does not have a predecessor and it is called the {\it root} of $T_l$. 

\item[$\bullet$] If $a$ is an arrow of $T_l$ and $a',a''$ are marked with $i,j$, respectively, then $a$ is marked with $4^{j-i}\de$, where $\de=3d/2$.
\end{enumerate}

\example\label{ex1} 
$$T_1:\qquad%
\xymatrix@C=1.3cm@R=1.3cm{ %
\vtx{1} \ar@/^/@{->}[d]_{4\de} \\ %
\vtx{0}\\
}
\qquad\qquad\qquad
T_3:\qquad%
\xymatrix@C=1.3cm@R=1.3cm{ %
\vtx{3} \ar@/^/@{->}[d]^{4\de} \ar@/^/@{->}[drr] \ar@/^/@{->}[drrr] &&& \\%
\vtx{2} \ar@/^/@{->}[d]_{4\de} \ar@/^/@{->}[dr]&& 
\vtx{1} \ar@/^/@{->}[d]_{4\de} & \vtx{0} \\
\vtx{1} \ar@/^/@{->}[d]_{4\de} & \vtx{0} & \vtx{0} &   \\
\vtx{0}&&&\\
}
\begin{picture}(0,0) 
\put(-105,-23){$\scriptstyle 4^2\de$}%
\put(-48,-23){$\scriptstyle 4^3\de$}%
\put(-130,-69){$\scriptstyle 4^2\de$}%
\end{picture}$$%
Here we write a number that is prescribed to a vertex (an arrow, respectively) in this vertex (near this arrow, respectively). 
\bigskip

If $b$ is an oriented path in $T_l$, then we write $\deg{b}$ for the number of arrows in $b$ and $|b|$ for the product of numbers assigned to arrows of $b$. Denote by $P_l$ the set of maximal (by degree) paths in $T_l$. Note that there is 1-to-1 correspondence between $P_l$ and the set of leaves of $T_l$, i.e., vertices marked with $0$. 
We claim that
$$C_{n,d}<\sum_{b\in P_k} |b|.$$
To prove this statement we use induction on $n\geq2$. If $n=2$, then $k=1$ and $C_{2,d}<4\de$ by~\Ref{eq_key}, and therefore the statement holds. For $n>2$ formulas~\Ref{eq_key} and $[[n/2^{j_1}]/2^{j_2}]=[n/2^{j_1+j_2}]$ for all $j_1,j_2>0$ together with the induction hypothesis imply that 
$$C_{n,d}<\sum_{j=1}^k \sum_{b\in P_{k-j}} 4^j\de\ |b|.
$$
The statement is proven.

Since the sum of exponents of $4$ along every maximal path is $k$, we obtain that
\begin{eq}\label{eq_formula}
C_{n,d}<\sum_{b\in P_k} 4^{k} \left(\frac{3d}{2}\right)^{\deg{b}}.
\end{eq}%

Given $1\leq r\leq k$, denote by $P_{k,r}$ the set of $b\in P_{k}$ with $\deg{b}=r$. We claim that 
\begin{eq}\label{eq_claim}
\# P_{k,r}=\binom{k-1}{r-1}, 
\end{eq}%
where $\# P_{k,r}$ stands for the cardinality of $P_{k,r}$. To prove the claim we notice that $P_{k,r}$ is the set of $r$-tuples $(j_1,\ldots,j_r)$ satisfying $j_1,\ldots,j_r\geq1$ and $j_1+\cdots+j_r=k$. Hence $\# P_{k,r}$ is equal to the cardinality of the set of all $(r-1)$-tuples $(q_1,\ldots,q_{r-1})$ such that $1\leq q_1<\cdots< q_{r-1}\leq k-1$ since we can set $j_1=q_1$, $j_2=q_2-q_1,\ldots, j_r=k-q_{r-1}$. The claim is proven.

Applying~\Ref{eq_claim} to inequality~\Ref{eq_formula}, we obtain
$$C_{n,d}< 4^k \sum_{r=1}^k \left(\frac{3d}{2}\right)^r \binom{k-1}{r-1} = %
4^k \frac{3d}{2}\sum_{r=0}^{k-1} \left(\frac{3d}{2}\right)^r \binom{k-1}{r}= %
4^k \frac{3d}{2} \left( 1+ \frac{3d}{2}\right)^{k-1}.$$
Thus,
$$C_{n,d}<4^k \left( 1+ \frac{3d}{2}\right)^{k}.$$
Since $2^k\leq n$, we have 
$$C_{n,d}<n^2 \left( 1+ \frac{3d}{2}\right)^{\log_2(n)}=n^{\log_2\left(1+\frac{3d}{2}\right)+2} = 
n^{\log_2(3d+2)+1}.$$
Corollary~\ref{cor_poly} is proven.

\section{Corollaries}\label{section_cor}

\begin{cor}\label{cor1}
Let $p>\frac{n}{2}$. Then $C_{n,d}<4\cdot 2^{n/2}d$. Moreover, if $n\geq30$, then $C_{n,d}<2\cdot 2^{n/2}d$. 
\end{cor}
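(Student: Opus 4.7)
The plan is to combine the recursive bound~\Ref{eq_main} of Theorem~\ref{theo_main} with the classical Nagata--Higman inequality $C_{k,d} \leq 2^k - 1$, valid whenever $p = 0$ or $p > k$. Since the assumption $p > n/2 \geq [n/i]$ forces $p > [n/i]$ for every $2 \leq i \leq n$, Nagata--Higman applies to each summand of the recursion, giving
$$C_{n,d} \leq d \sum_{i=2}^{n} (i-1)\bigl(2^{[n/i]} - 1\bigr) + 1 \leq d\cdot \Sigma_n + 1,$$
where I set $\Sigma_n := \sum_{i=2}^{n} (i-1)\cdot 2^{n/i}$. The task then reduces to showing that $\Sigma_n + 1/d < 4\cdot 2^{n/2}$ holds for every $n \geq 2$, and $\Sigma_n + 1/d < 2\cdot 2^{n/2}$ holds for every $n \geq 30$.

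Next, I would isolate the dominant $i=2$ contribution and write $\Sigma_n = 2^{[n/2]} + T_n$ with $T_n := \sum_{i=3}^{n}(i-1)\cdot 2^{[n/i]}$. Using the identity $n/i - n/2 = -n(i-2)/(2i)$, the ratio $T_n/2^{n/2}$ is bounded by $\sum_{i=3}^{n}(i-1)\cdot 2^{-n(i-2)/(2i)}$; for $i \geq 3$ every exponent $-n(i-2)/(2i)$ is at most $-n/6$, and the exponents decrease in $i$. To control this tail ratio I would split the summation at some cutoff $I_0$: for $3 \leq i \leq I_0$ the initial terms $2\cdot 2^{-n/6}$, $3\cdot 2^{-n/4}$, $4\cdot 2^{-3n/10},\ldots$ decay geometrically and are summed explicitly, whereas for $i > I_0$ one applies the coarser bound $2^{[n/i]} \leq 2^{n/I_0}$ combined with $\sum_{i=I_0+1}^n (i-1) \leq n^2/2$. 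An appropriate choice (of the order $I_0 \sim \sqrt{n}$) makes both parts simultaneously small and yields $T_n/2^{n/2} \to 0$ as $n\to\infty$.

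The main obstacle will be calibrating the constants $4$ and $2$ sharply. The asymptotic decay of $T_n/2^{n/2}$ gives the stronger constant $2$ with comfortable margin once $n$ is sufficiently large; the threshold $n \geq 30$ reflects where the factor $2^{-n/6} \leq 2^{-5}$ makes the leading tail terms manifestly negligible compared to $2^{n/2}$. For the universal constant $4$, however, one must cover all $n \geq 2$, and the delicate range is the moderate one $n \leq 29$, where the generic estimate is too loose; there I would perform a finite verification, evaluating $\Sigma_n$ directly from the refined identity $\Sigma_n = \sum_{i=2}^n (i-1)(2^{[n/i]}-1)$ with the explicit values $C_{1,d} = 1$ and $C_{2,d} = 3$ (valid since $p \neq 2$ under the hypothesis $p > n/2$) substituted in, and checking the desired inequality case by case. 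No further structural input is required.
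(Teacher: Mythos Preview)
Your proposal is correct and follows the same overall strategy as the paper: feed Nagata--Higman into the recursion of Theorem~\ref{theo_main}, isolate the dominant $i=2$ term, control the tail, and verify small $n$ by hand. The one substantive difference is in how the tail is controlled. You propose splitting the sum at a cutoff $I_0\sim\sqrt{n}$ and treating the two ranges separately; the paper instead proves a single clean monotonicity lemma, namely that $\gamma_i=(i-1)2^{n/i}$ satisfies $\gamma_i\leq\gamma_3$ for all $3\leq i\leq m=[n/2]$ (Lemma~\ref{lemma_tec1}), which immediately gives $\sum_{i=2}^m\gamma_i\leq \gamma_2+(m-2)\gamma_3$ and hence the closed-form majorant $\delta_n=2^{n/2}+2^{n/3}(n-4)+\frac14(n+1)^2$. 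The paper's route avoids the cutoff bookkeeping and reduces the large-$n$ case to the transparent inequality $2^{n/2}\geq n\cdot 2^{n/3}$, at the cost of still needing a finite check for $n<30$ (exactly as you anticipate). Your splitting argument would also go through, but the paper's $\gamma_i\leq\gamma_3$ lemma is the tidier device and worth knowing. One minor slip: your displayed definitions of $\Sigma_n$ and $T_n$ are inconsistent (one uses $2^{n/i}$, the other $2^{[n/i]}$); this is harmless for the argument but should be cleaned up.
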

\bigskip

We split the proof of Corollary~\ref{cor1} into several lemmas. Let  $m=[n/2]$.  
For $2\leq i\leq m$ denote 
$\ga_i=(i-1) 2^{n/i}$ and $\de_n=2^{n/2}+2^{n/3}(n-4)+\frac{1}{4}(n+1)^2$. 

\begin{lemma}\label{lemma_tec1}
For $3\leq i\leq m$ the inequality $\ga_i\leq \ga_3$ holds.  
\end{lemma}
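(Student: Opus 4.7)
The plan is to establish the chain $\ga_3\geq\ga_4\geq\cdots\geq\ga_m$, whence $\ga_i\leq\ga_3$ for $3\leq i\leq m$ follows by iteration. Accordingly, I fix an integer $i$ with $3\leq i\leq m-1$ and aim to prove $\ga_{i+1}\leq\ga_i$.

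Writing out the ratio,
\[
\frac{\ga_{i+1}}{\ga_i}=\frac{i}{i-1}\cdot 2^{-n/(i(i+1))},
\]
the inequality $\ga_{i+1}\leq\ga_i$ is equivalent to $i(i+1)\log_2(i/(i-1))\leq n$. I would then apply the elementary estimate $\ln(1+t)\leq t$ with $t=1/(i-1)$ to obtain $\log_2(i/(i-1))\leq 1/((i-1)\ln 2)$, reducing the task to $i(i+1)/((i-1)\ln 2)\leq n$. The hypothesis $i+1\leq m=[n/2]$ gives $n>2(i+1)$, hence $n\geq 2i+3$ since $n$ is an integer; it therefore suffices to verify the purely algebraic inequality $i(i+1)\leq(i-1)(2i+3)\ln 2$ for every integer $i\geq 3$.

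Rewriting this as $q(i):=(2\ln 2-1)i^2+(\ln 2-1)i-3\ln 2\geq 0$, I observe that the leading coefficient $2\ln 2-1$ is positive, so $q$ is a convex quadratic. A direct evaluation yields $q(3)=18\ln 2-12>0$ since $18\ln 2>12.4$; moreover, $q'(i)=2(2\ln 2-1)i+\ln 2-1$ is already positive at $i=3$ and increasing, so $q$ itself is increasing on $[3,\infty)$ and remains nonnegative throughout.

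The only delicate point is the tightness at $i=3$: the final inequality holds there by a margin of only about $0.5$, so both estimates $\log_2(1+t)\leq t/\ln 2$ and $n\geq 2i+3$ must be used at full strength; replacing either by something looser breaks the argument precisely at the boundary. Otherwise the proof is a routine monotonicity computation.
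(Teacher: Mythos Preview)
Your proof is correct and takes a genuinely different route from the paper. The paper compares each $\gamma_i$ directly with $\gamma_3$: it rewrites $\gamma_i\leq\gamma_3$ as $i-1\leq 2\cdot 2^{\,n(i-3)/(3i)}$, verifies $i=4$ by hand, and for $i\geq 5$ bounds the exponent $\tfrac{i-3}{3i}\geq\tfrac{2}{15}$ and uses $i-1\leq n/2$ to reduce everything to the single inequality $n\leq 4\cdot 2^{2n/15}$. You instead prove that the whole sequence $(\gamma_i)_{i\geq 3}$ is nonincreasing, passing through the logarithm bound $\ln(1+t)\leq t$ and a clean quadratic in $i$. Your argument is more uniform (no case split on $i$) and gives the stronger monotonicity statement, while the paper's is cruder but avoids any appeal to $\ln 2$ or derivative calculations.

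One point worth flagging: your step ``$i+1\leq m=[n/2]$ gives $n>2(i+1)$, hence $n\geq 2i+3$'' relies on the paper's stated convention that $[a]$ denotes the largest integer \emph{strictly} below $a$, so that $m<n/2$ always. Under the ordinary floor one would only get $n\geq 2i+2$, and then your sufficient condition $i(i+1)\leq (i-1)n\ln 2$ fails at the single point $(i,n)=(3,8)$ (where $6/\ln 2\approx 8.66>8$), even though $\gamma_4=12<2\cdot 2^{8/3}\approx 12.7=\gamma_3$ still holds there. So your proof stands as written, but the margin at $i=3$ that you yourself note is indeed essential.
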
 
\begin{proof} The required inequality is equivalent to the following one:
\begin{eq}\label{eq_ineq1}
i-1\leq 2\cdot 2^{n\frac{i-3}{3i}}.
\end{eq}%
\indent{}Let $i=4$. Then $n\geq 8$ and it is not difficult to see that the inequality $3\leq 2\cdot 2^{n/12}$ holds. 

Let $i\geq 5$. Then inequality~\Ref{eq_ineq1} follows from $i-1\leq 2\cdot 2^{2n/15}$. Since $i-1\leq \frac{n}{2}$, the last inequality follows from $n\leq 4\cdot 2^{2n/15}$, which holds for all $n\geq2$. 
\end{proof}

\begin{lemma}\label{lemma_tec2}
For $n\geq2$ the inequality $\de_n\leq 4\cdot 2^{n/2}-1$ holds. Moreover, $\de_n\leq 2 \cdot 2^{n/2}-1$ in case $n\geq 30$. 
\end{lemma}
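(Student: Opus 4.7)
The plan is to prove both inequalities by splitting the exponential upper bound on the right-hand side into pieces that separately dominate the non-leading summands $(n-4)\,2^{n/3}$ and $\tfrac14 (n+1)^2$ of $\de_n$, handling a finite number of small cases by direct computation. The splits rely on elementary polynomial-versus-exponential comparisons.

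For the first inequality, once $n$ is large enough one has both $(n-4)\,2^{n/3} \leq 2\cdot 2^{n/2}$ (equivalent to $(n-4)/2 \leq 2^{n/6}$) and $\tfrac14(n+1)^2 + 1 \leq 2^{n/2}$; together with the trivial $2^{n/2} \leq 2^{n/2}$ this yields $\de_n \leq 4 \cdot 2^{n/2} - 1$. Both auxiliary inequalities hold for $n \geq 14$, which can be seen by induction on integer $n$: the right-hand side grows by the multiplicative factor $2^{1/6}$ per step while the left-hand side grows only additively, so once the right side exceeds a modest threshold the induction closes. The finitely many small cases $2 \leq n \leq 13$ are then disposed of by tabulating $\de_n$ and verifying $\de_n \leq 4\cdot 2^{n/2} - 1$ by inspection.

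For the second inequality the scheme is the same but with a finer split, say $2\cdot 2^{n/2} - 1 = 2^{n/2} + \tfrac{9}{10}\cdot 2^{n/2} + (\tfrac{1}{10}\cdot 2^{n/2} - 1)$, yielding the two targets $(n-4)\,2^{n/3} \leq \tfrac{9}{10}\cdot 2^{n/2}$ (equivalent to $n - 4 \leq \tfrac{9}{10}\cdot 2^{n/6}$) and $\tfrac14(n+1)^2 + 1 \leq \tfrac{1}{10}\cdot 2^{n/2}$. Both hold at $n = 30$ (the first gives $26 \leq 28.8$, the second gives $241.25 \leq 3276.8$) and the same growth-rate induction propagates them to all $n \geq 30$; the base case $n = 30$ of the lemma itself is $\de_{30} = 2^{15} + 26\cdot 2^{10} + 961/4 = 59632.25 \leq 65535 = 2\cdot 2^{15} - 1$, also verifiable directly. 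The main obstacle is the tightness of the margin—at $n = 9$ the first inequality compares $\de_9 \approx 87.6$ with $89.5$, and at $n = 30$ the second compares $59632.25$ with $65535$—so the numerical splits above must be chosen with some care, and the threshold $n \geq 30$ in the second part is dictated by precisely this margin rather than by any a priori round constant.
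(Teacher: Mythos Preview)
Your approach is correct and essentially the same as the paper's: split the difference $c\cdot 2^{n/2}-1-\de_n$ into pieces that are separately nonnegative by polynomial-versus-exponential comparisons, then handle finitely many small $n$ by direct computation. The paper's only difference is cosmetic: for $n\geq 30$ it regroups as $(2^{n/2}-n\cdot 2^{n/3})+(4\cdot 2^{n/3}-\tfrac14(n+1)^2-1)$, borrowing the $4\cdot 2^{n/3}$ from the middle term to dominate the polynomial tail rather than using a fraction of $2^{n/2}$, and it does not bother with a separate analytic argument for the first inequality, simply invoking direct calculation for all $2\leq n<30$.
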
 
\begin{proof}
Let $n\geq 30$. Then it is not difficult to see that $2\cdot 2^{n/2}-1-\de_n=\left(2^{n/2}-n\cdot 2^{n/3}\right)+\left(4\cdot 2^{n/3}-\frac{1}{4}(n+1)^2-1\right)\geq 0$. If $2\leq n <30$, then performing calculations we can see that the claim of the lemma holds. 
\end{proof}

Now we can prove Corollary~\ref{cor1}:

\begin{proof} If $n=2$ or $n=3$, respectively, then $C_{n,d}\leq\max\{3,d\}$ or $C_{n,d}\leq 3d+1$, respectively (see Section~\ref{section_intro}), and the required is proven.   

Assume that $n\geq4$. By Remark~\ref{remark_main}, $C_{n,d}\leq A_n d+1$. Since $p>[n/i]$ for $2\leq i\leq m$, the Nagata--Higman Theorem implies $C_{[n/i],d}\leq 2^{n/i}-1$. Thus,
$$A_n\leq \sum_{2\leq i\leq m} \ga_i + \be_n,$$
where $\be_n=\frac{1}{2}\left(-m(m-1) + \frac{}{}\!(m+n-1)(n-m)\right)$. Separately considering the cases of $n$ even and odd, we obtain that $\be_n\leq (n+1)^2/4$. Since $m\geq2$, Lemma~\ref{lemma_tec1} implies that 
$$\sum_{2\leq i\leq m} \ga_i\leq \ga_2 + \ga_3(m-2).$$  
It follows from the above mentioned upper bound on $\be_n$ and the inequality $m\leq\frac{n}{2}$ that $A_n\leq \de_n$. Lemma~\ref{lemma_tec2} completes the proof.
\end{proof}

To prove Corollary~\ref{cor1a} (see below) we need the following slight improvement of the upper bound from Nagata--Higman Theorem. 

\begin{lemma}\label{lemma_NH2}
If $p>n$, then $C_{n,d}< 7 \cdot 2^{n-3}$ for all $n\geq3$.
\end{lemma}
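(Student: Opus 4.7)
The plan is to reuse the classical Nagata--Higman induction with a sharper base case. Under the hypothesis $p>n$ (so that $n!$ is invertible), the Dubnov--Ivanov--Nagata--Higman argument -- which proceeds by multilinearizing $x^n=0$ to obtain the identity $\sum_{\sigma\in S_n}x_{\sigma(1)}\cdots x_{\sigma(n)}=0$ and then constructing an ideal of $N_{n,d}$ modulo which the nilpotency index drops by one -- yields the recursive inequality
$$C_{n,d}\leq 2\,C_{n-1,d}+1.$$
Combined with the trivial base $C_{1,d}=1$ this produces the classical bound $C_{n,d}\leq 2^n-1$ already cited in Section~\ref{section_intro}.

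To obtain the stated $\tfrac78$-improvement I would simply start the induction at $n=3$ instead of $n=1$, using the sharper value $C_{3,d}=6$ recalled in Section~\ref{section_intro} (valid for $p>3$, hence in particular for all $p>n$ with $n\geq 3$). A one-line induction on $n\geq 3$ then gives $C_{n,d}\leq 7\cdot 2^{n-3}-1<7\cdot 2^{n-3}$: the base case reads $C_{3,d}\leq 6=7\cdot 2^{0}-1$, and the inductive step is
$$C_{n,d}\leq 2\bigl(7\cdot 2^{n-4}-1\bigr)+1=7\cdot 2^{n-3}-1.$$

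The main (and only non-routine) point is the recursion $C_{n,d}\leq 2C_{n-1,d}+1$, which is the standard content of the Nagata--Higman proof and which I would invoke rather than re-derive. Once one accepts this recursion, the lemma is purely bookkeeping: any improvement at a single value of $n$ propagates exponentially down the recursion, and plugging in the classical value $C_{3,d}=6$ suffices to shave the leading constant from $8$ to $7$ in the Nagata--Higman bound. Note in particular that the other tools developed in the present paper (Theorem~\ref{theo_main} and its corollaries) yield bounds that depend linearly on $d$, and so are not suited to a $d$-independent improvement of this kind; it is precisely the uniform-in-$d$ character of the Nagata--Higman recursion that is being exploited here.
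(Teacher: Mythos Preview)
Your proposal is correct and matches the paper's proof almost exactly: both use the base case $C_{3,d}=6$ together with the recursion $C_{n,d}\leq 2C_{n-1,d}+1$, then unwind to obtain $C_{n,d}\leq 7\cdot 2^{n-3}-1$. The only cosmetic difference is that the paper isolates the specific identity $n\,x^{n-1}a\,y^{n-1}=0$ in $N_{n,d}$ (equation~\Ref{eq_wellknown}, quoted from Jacobson) as the source of the recursion, whereas you phrase it as the generic Nagata--Higman step; these are the same thing.
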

\begin{proof}
If $n=3$, then the claim of the lemma follows from $C_{3,d}=6$ (see Section~\ref{section_intro}). 

It is well known that 
\begin{eq}\label{eq_wellknown}
n x^{n-1}a y^{n-1}=0 
\end{eq}%
in $N_{n,d}$ for all $a,x,y$ (see~\cite{Jacobson64}). Thus, $C_{n,d}\leq 2C_{n-1,d}+1$. Applying this formula recursively,  we obtain that $C_{n,d}\leq 2^{n-3} C_{3,d} + \sum_{i=0}^{n-4} 2^i$ for $n\geq4$. Since $p>4$, the equality $C_{3,d}=6$ concludes the proof.

\end{proof}

\begin{cor}\label{cor1a}
Let $4\leq n\leq 9$ and $\frac{n}{2}<p\leq n$. Then $C_{n,d}\leq a_n d+1$, where $a_4=8$, $a_5=12$, $a_6=24$, $a_7=30$, $a_8=50$, $a_9=64$.
\end{cor}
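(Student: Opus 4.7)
The plan is to apply the reformulated recursion of Remark~\ref{remark_main}, namely $C_{n,d}\leq A_n d+1$ with
$$A_n=\sum_{i=2}^m(i-1)C_{[n/i],d}+\tfrac{1}{2}(n+m-1)(n-m),\qquad m=[n/2],$$
separately for each $n\in\{4,\ldots,9\}$. Theorem~\ref{theo_main} is applicable throughout since the hypothesis $p>n/2$ is precisely its standing assumption, and for these small values of $n$ every index $[n/i]$ with $2\leq i\leq m$ lies in $\{2,3,4\}$. Thus each $A_n$ reduces to a genuine constant as soon as sharp $d$-independent bounds on $C_{2,d}$, $C_{3,d}$, and (for $n=8,9$) $C_{4,d}$ are in hand.

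First I would record the three input estimates: from the tables in Section~\ref{section_intro}, $C_{2,d}=3$ whenever $p>2$ and $C_{3,d}=6$ whenever $p>3$; and from Lemma~\ref{lemma_NH2} with $n=4$, $C_{4,d}\leq 13$ whenever $p>4$. The crucial check is that the assumption $p>n/2$ forces $p\geq 3$ for $n\in\{4,5\}$, $p\geq 5$ for $n\in\{6,7\}$, and $p\geq 5>4$ for $n\in\{8,9\}$, so in each case the characteristic conditions needed to invoke these input bounds hold automatically.

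Next I would carry out the six arithmetic computations. For instance, for $n=6$ one has $(m,\text{extra})=(3,12)$, giving $A_6=C_{3,d}+2C_{2,d}+12=6+6+12=24$; for $n=8$ one has $(4,22)$, giving $A_8\leq C_{4,d}+5C_{2,d}+22\leq 13+15+22=50$; analogously $A_4=C_{2,d}+5=8$, $A_5=C_{2,d}+9=12$, $A_7=C_{3,d}+2C_{2,d}+18=30$, and $A_9\leq C_{4,d}+2C_{3,d}+3C_{2,d}+30\leq 13+12+9+30=64$. Combining each line with Remark~\ref{remark_main} yields the six claimed inequalities $C_{n,d}\leq a_n d+1$.

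The argument is essentially bookkeeping, but the one substantive point worth flagging is the sharpness of $a_8=50$ and $a_9=64$: both rely on the improved Nagata--Higman estimate $C_{4,d}\leq 13$ from Lemma~\ref{lemma_NH2}, since the crude bound $C_{4,d}\leq 2^4-1=15$ would only yield $A_8\leq 52$ and $A_9\leq 66$. Hence Lemma~\ref{lemma_NH2} is essential, not cosmetic, for the two largest cases; for $n=4,\ldots,7$ only $C_{2,d}$ and $C_{3,d}$ enter, so Nagata--Higman plays no role at all.
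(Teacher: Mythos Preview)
Your proof is correct and follows exactly the paper's approach: input the known values $C_{2,d}=3$ (for $p>2$), $C_{3,d}=6$ (for $p>3$), and $C_{4,d}\leq 13$ (for $p>4$, via Lemma~\ref{lemma_NH2}) into the recursion of Theorem~\ref{theo_main}/Remark~\ref{remark_main}, check that the characteristic hypothesis $p>n/2$ makes each input available, and compute $A_n$ for each $n$. Your write-up is considerably more explicit than the paper's (which simply says ``applying \ldots\ recursively \ldots\ we obtain the required''), and your closing remark that the sharper bound $C_{4,d}\leq 13$ of Lemma~\ref{lemma_NH2} is genuinely needed for $a_8=50$ and $a_9=64$ is a nice observation not made in the paper.
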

\begin{proof}
We have $C_{2,d}=3$ in case $p>2$ and $C_{3,d}=6$ in case $p>3$ (see Section~\ref{section_intro}) By Lemma~\ref{lemma_NH2}, $C_{4,d}\leq 13$ in case $p>4$. Applying the upper bound on $C_{n,d}$ from Theorem~\ref{theo_main} recursively and using the above given estimations on $C_{k,d}$ for $k=2,3,4$, we obtain the required.
\end{proof}

The following conjecture is a generalization of Razmyslov's upper bound to the case of $p>n$ and it holds for $n=2,3$:

\begin{conj}\label{conj_n2}
For all $n,d\geq 2$ and $p>n$ we have $C_{n,d}\leq n^2$.
\end{conj}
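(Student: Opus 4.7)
The plan is to transport Razmyslov's 1974 proof of the bound $C_{n,d}\leq n^2$ in characteristic~$0$ (cited in the introduction) to the setting of $p>n$. The bound $n^2$ is uniform in $d$, so no finite iteration of the recursive estimate of Theorem~\ref{theo_main} can reach it: every application of~\Ref{eq_main} inserts a fresh factor of $d$, which already rules out the bounds of Corollary~\ref{cor_poly} and Corollary~\ref{cor1} as vehicles. A trace-algebra argument in the spirit of Razmyslov is therefore the natural candidate. An alternative route would be to prove Conjecture~\ref{conj_optimistic} first and then quote Razmyslov, but that seems no easier.

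First I would observe that $p>n$ forces $n!$ to be invertible in $\FF$, so every polynomial identity of degree at most $n$ in each variable is $\FF$-linearly equivalent to its complete multilinearization. In particular, the multilinear consequences of $x^n=0$ are the same over $\FF$ as over $\QQ$. Next I would reproduce Razmyslov's key trace identity inside a generic $n\times n$ matrix model: realize a formal element $a\in N_{n,d}$ as an $n\times n$ matrix $A$ with indeterminate entries satisfying $a^n=0$; use Newton's identities to express $\tr(A^k)$ in terms of the elementary symmetric functions of the eigenvalues, which all vanish because $A$ is nilpotent; and then apply Cayley--Hamilton to represent $A^n$ as an $\FF$-linear combination of multilinearizations of $x^n=0$. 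The rational coefficients arising in this derivation have denominators dividing $n!$, hence are admissible once $p>n$.

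From this trace identity the conclusion follows by the same combinatorial bookkeeping as in the characteristic-zero proof: every multilinear word of length $n^2$ in $N_{n,d}$ is a combination of multilinearizations of $x^n=0$ and therefore vanishes. Because $N_{n,d}$ is $\NN^d$-graded and multilinearization can be inverted modulo scalars in $n!\,\FF^{\ast}$ when $p>n$, the vanishing of every multilinear word of length $n^2$ propagates to all words of length $n^2$, giving $C_{n,d}\leq n^2$.

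The main obstacle is to control the denominators rigorously throughout the passage from trace identities to pure polynomial identities of $N_{n,d}$: in the characteristic-zero proof these denominators are invisible, and a careful accounting is required to confirm that no prime exceeding $n$ ever appears. A related technical point is that $N_{n,d}$ carries no trace, so the trace-form Cayley--Hamilton identity must be converted to a pure identity of $N_{n,d}$ before specialization; in characteristic zero this is handled by embedding into an algebra with trace and then projecting, but in positive characteristic the projection contributes denominators that must again be checked to divide $n!$. This verification, I expect, is the bulk of the work and is presumably the reason the statement is listed as Conjecture~\ref{conj_n2} rather than a theorem.
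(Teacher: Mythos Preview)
The paper does not prove this statement: Conjecture~\ref{conj_n2} is explicitly listed as open, with only the cases $n=2,3$ noted as known. There is therefore no paper proof to compare against, and you acknowledge this yourself in the last paragraph, so your text is a programme rather than a claimed proof.

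As a programme it names the natural line of attack---transport Razmyslov's characteristic-zero argument to $p>n$ by controlling denominators---but the central step is not supplied. Your assertion that ``every multilinear word of length $n^2$ in $N_{n,d}$ is a combination of multilinearizations of $x^n=0$ and therefore vanishes'' \emph{is} Razmyslov's theorem; the preceding paragraph (Cayley--Hamilton plus Newton's identities applied to a single nilpotent generic matrix) only recovers the hypothesis $a^n=0$ and gives no information about products $a_1\cdots a_{n^2}$ of distinct letters. Razmyslov's actual mechanism passes through the algebra with trace and a substitution/weak-identity technique whose intermediate rational coefficients are not manifestly supported on primes $\leq n$; that accounting is precisely the obstruction you flag, and nothing in the sketch removes it. The closing step---deducing vanishing of arbitrary words of length $n^2$ from the multilinear case by inverting linearization when $p>n$---is correct, but is moot until the multilinear case is established.

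In short: there is nothing in the paper to match, your outline is a reasonable statement of the expected route, and the gap you identify at the end is real and remains the whole difficulty.
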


\begin{cor}\label{cor_modulo_conj}
Assume that Conjecture~\ref{conj_n2} holds. Then $C_{n,d}<n^2 \ln(n)\, d$ for $\frac{n}{2}<p\leq n$.  
\end{cor}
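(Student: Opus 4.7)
The plan is to insert Conjecture~\ref{conj_n2} into the recursive upper bound of Remark~\ref{remark_main} and then estimate the resulting expression. Writing $m=[n/2]$, the hypothesis $p>n/2$ gives $p>m$, and since $[n/i]\leq m$ for every $i\geq 2$ we have $p>[n/i]$ for every index appearing in the sum. Thus Conjecture~\ref{conj_n2} applies to each sub-problem and yields $C_{[n/i],d}\leq [n/i]^2\leq (n/i)^2$. Substituting into
$$A_n=\sum_{i=2}^m (i-1)\,C_{[n/i],d}+\tfrac{1}{2}(n+m-1)(n-m),$$
one obtains
$$A_n\leq n^2\sum_{i=2}^m\frac{i-1}{i^2}+\tfrac{1}{2}(n+m-1)(n-m).$$

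Next I would estimate the two pieces separately. For the harmonic-style sum I use $(i-1)/i^2<1/i$ together with the standard integral comparison $\sum_{i=2}^m 1/i\leq\ln m\leq\ln(n/2)=\ln n-\ln 2$, giving the first term at most $n^2(\ln n-\ln 2)$. For the quadratic tail, $m\leq n/2$ and a direct expansion yield $\tfrac{1}{2}(n+m-1)(n-m)<\tfrac{3}{8}n^2$ (the constant $3/8$ comes from $(n+n/2-1)(n/2)/2$). Combining the two estimates,
$$A_n<n^2\ln n-\bigl(\ln 2-\tfrac{3}{8}\bigr)n^2.$$

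Since $\ln 2-3/8>0.31$, the slack $(\ln 2-3/8)n^2 d$ strictly exceeds $1$ for all $n,d\geq 2$ (already at $n=d=2$ it is at least $2.5$). Hence by Remark~\ref{remark_main},
$$C_{n,d}\leq A_n d+1<n^2\ln(n)\,d,$$
as claimed. There is no real obstacle in this argument: once Conjecture~\ref{conj_n2} is inserted, the proof reduces to routine numerical estimation. The only subtlety worth verifying is that the positive excess $n^2(\ln 2-3/8)d$ absorbs the additive $+1$ produced by Remark~\ref{remark_main}, which is immediate from $n,d\geq 2$. One might worry that trimming the harmonic sum by $\ln m$ rather than $\ln(n/2)$ could be too lossy for small $n$, but the concrete check at $n=2,3$ confirms the bound holds even in these edge cases.
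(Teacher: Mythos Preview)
Your argument is correct and follows essentially the same approach as the paper's proof: insert the conjectural bound $C_{[n/i],d}\leq (n/i)^2$ into Remark~\ref{remark_main}, estimate the tail $\tfrac12(n+m-1)(n-m)$ by $3n^2/8$, bound the harmonic-type sum, and check that the resulting slack absorbs the $+1$. The only cosmetic differences are that the paper treats $n=2,3$ separately by quoting the exact values of $C_{n,d}$ from Section~\ref{section_intro}, and for $n\geq4$ uses the bound $\xi_m<\ln m+\gamma+\tfrac{1}{2m}$ on the harmonic number, whereas you use the cleaner integral comparison $\sum_{i=2}^m 1/i\leq\ln m$ and keep the small cases inside the same computation.
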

\begin{proof}
For $n=2,3$ the claim holds by Section~\ref{section_intro}.   

Assume that $n\geq4$. By Remark~\ref{remark_main}, $C_{n,d}\leq A_n d+1$. Since $p>[n/i]$, Conjecture~\ref{conj_n2} implies 
$$A_n\leq \sum_{2\leq i\leq m} (i-1)\frac{n^2}{i^2} + \be'_n,$$
where $\be'_n=\frac{1}{2}(m+n-1)(n-m)$. Separately considering the cases of $n$ even and odd, we obtain that $\be'_n\leq 3n^2/8$. Denote by  $\xi_m$ the $m^{\rm th}$ harmonic number $1+\frac{1}{2}+\frac{1}{3}+\cdots+\frac{1}{m}$. We have
$$A_n< n^2 (\xi_m-1)+\frac{3}{8}n^2 -1.$$
Since $\xi_m<\ln{m}+\ga + \frac{1}{2m}$, where $\ga<1$ is Euler's constant (for example, see pages 73 and 79 of~\cite{Havil_book}), 
$$A_n<n^2\left(\ln{m} + \frac{5}{8}\right)-1 < n^2\ln(n)-1$$
and we obtain the required inequality.
\end{proof}

\begin{remark}\label{remark_Belov}{} Using another approach, in recent paper~\cite{Belov2011} Belov and Kharitonov obtained the following upper bounds on $C_{n,d}$ for all $p$:
\begin{enumerate}
\item[1)] $C_{n,d}\leq 4^{\log_3(64)+5}\cdot (n^{12})^{\log_3(4n)+1}d$ (Corollary~1.16 from~\cite{Belov2011});

\item[2)] $C_{n,d}\leq 256\cdot n^{8\log_2(n)+22}d$ (see Theorem~1.17 from~\cite{Belov2011});
\end{enumerate}
where the second estimation is better for small $n$. These bounds are linear with respect to $d$ and subexponential with respect to $n$. 

Let us compare bounds~1) and~2) with the bound from Corollary~\ref{cor1} in case $p>\frac{n}{2}$: $C_{n,d}<4\cdot 2^{n/2}d$. If $n>\!\!>\!0$ is large enough, then bounds~1) and~2) are essentially better than the bound from Corollary~\ref{cor1}. On the other hand, 
for $4\leq n\leq 2000$ the bound from Corollary~\ref{cor1} is at least $10^{20}$ times better than bounds~1) and~2). This claim follows from straightforward computations.
\end{remark}

\section{The case of $n=4$}\label{section_n_is_4}

\begin{theo}\label{theo_n_is_4} 
For $d\geq 2$ we have 
\begin{enumerate}
\item[$\bullet$] $C_{4,d}=10$, if $p=0$;

\item[$\bullet$] $3d< C_{4,d}$, if $p=2$;

\item[$\bullet$] $3d+1\leq C_{4,d}\leq 3d+4$, if $p=3$;

\item[$\bullet$] $10\leq C_{4,d}\leq 13$, if $p>3$.
\end{enumerate}
\end{theo}
\bigskip

In what follows we assume that $n=4$ and $p\neq 2$ unless otherwise stated. To prove Theorem~\ref{theo_n_is_4} (see the end of the section), we introduce a new $\approx$-equivalence on $\M_{\FF}$ as follows.
Given $\un{\al}\in\NN^r$ and $\un{\be}\in\NN^s$ ($r,s\geq0$), we write 
$$\un{\al}\succ\un{\be}\;\text{ if }\;r<s.$$%
Using $\succ$ instead of $>$,  we introduce the partial order $\succ$ on $\M$ similarly to Definition~\ref{def_order}. Then, using the partial order $\succ$ on $\M$ instead of $>$, we introduce the $\approx$-equivalence on $\M_{\FF}$ similarly to the $\eqIII$-equivalence (see Definition~\ref{def_eqIII}). The resulting definition of $\approx$ is the following one:

\definition{of the $\approx$-equivalence on $\M_{\FF}$}\label{def_approx} 
\begin{enumerate}
\item[1.] Let $f=\sum_i\al_i a_i\in\M_{\FF}$, where $\al_i\in\FF^{\ast}$, $a_i\in\M$, and  $\#\pwr_y(a_i)=\#\pwr_y(a_{i'})$ for every letter $y$ and all $i,i'$.
Then $f\approx 0$ if $f=0$ in $N_{n,d}$ or $f=\sum_j\be_j b_j$ in $N_{n,d}$ for $\be_j\in\FF^{\ast}$, $b_j\in\M$ satisfying 
\begin{enumerate}
\item[$\bullet$] $\#\pwr_x(a_i)>\#\pwr_x(b_j)$ for some letter $x$,

\item[$\bullet$] $\#\pwr_y(a_i)\geq\#\pwr_y(b_j)$ for every letter $y$
\end{enumerate}
for all $i,j$;

\item[2.] If $f=\sum_k f_k\in\M_{\FF}$ and $f_k\approx 0$ satisfies conditions from part~1 for all $k$, then $f\approx 0$. 
\end{enumerate}
Given $h\in\M_{\FF}$, we write $f\approx h$ if $f-h\approx 0$.
\bigskip

\begin{remark} Note that the partial order $>$ on $\M$ is stronger than $\succ$. Namely, for $a,b\in\M$ we have
\begin{enumerate}
\item[$\bullet$] if $a \succ b$, then $a>b$;

\item[$\bullet$] if $a > b$, then $a \succ b$ or $a\approx b$.
\end{enumerate}
Therefore, $\eqIII$-equivalence on $\M_{\FF}$ is weaker than $\approx$-equivalence. Namely, for $f,h\in\M_{\FF}$ the equality $f\approx h$ implies $f\eqIII h$, but the converse statement does not hold.
\end{remark}
\bigskip

Let $a,b,c,a_1,\ldots,a_4$ be elements of $\M$. By definition, 
\begin{enumerate}
\item[$\bullet$] $T_{4}(a)=a^4$,

\item[$\bullet$] $T_{31}(a,b)=a^3 b + a^2 b a + a b a^2 + b a^3$,

\item[$\bullet$] $T_{211}(a,b,c)=a^2bc + a^2cb + ba^2c + ca^2b + bca^2 + cba^2 +
abca + acba + abac + acab + baca + cabc$,

\item[$\bullet$] $T_{22}(a,b)=a^2 b^2 + b^2 a^2 + abab + baba + a b^2 a + b a^2 b$,

\item[$\bullet$] $T_{1^4}(a_1,\ldots,a_4)=\sum_{\si\in S_4} a_{\si(1)}\cdots a_{\si(4)}$
\end{enumerate}
(see Section~\ref{section_main_theo}). Then 
$$T_4(a)=0,\quad T_{31}(a,b)=0,\quad T_{211}(a,b,c)=0, \quad T_{22}(a,b)=0, \quad 
T_{1^4}(a_1,\ldots,a_4)=0$$  
are relations for $N_{4,d}$, which generate the ideal of relations for $N_{4,d}$. Multiplying $T_{31}(a,b)$ by $a$ several times we obtain that equalities
\begin{eq}\label{eq_rel2}
a^3 b a + a^2 b a^2 + a b a^3 = 0, 
\end{eq} \vspace{-5mm}
\begin{eq}\label{eq_rel3}
a^3 b a^2 + a^2 b a^3 = 0, 
\end{eq} \vspace{-5mm}
\begin{eq}\label{eq_rel4}
a^3 b a^3 = 0 
\end{eq}%
hold in $N_{4,d}$.

\begin{remark}\label{remark_back}
Let $f\in\M_{\FF}$. Denote by ${\rm inv}(f)$ the element of $\M_{\FF}$ that we obtain by  reading $f$ from right to left. As an example, for $f=x_1^2x_2-x_3$ we have ${\rm inv}(f)=-x_3+x_2x_1^2$. 

Obviously, if $f=0$ in $N_{n,d}$, then ${\rm inv}(f)=0$ in $N_{n,d}$. Similar result also holds for $\approx$-equivalence.
\end{remark}

\begin{lemma} Let $x$ be a letter and $a,b,c\in\M^{\neg x}$. Then the next relations are valid in $N_{4,d}$:
\begin{eq}\label{eq_rel5}
x^3 a x b x^2 = - x^3 a x^2 b x,\quad x a x^3 b x^2 = x^3 a x^2 b x. 
\end{eq}%
Moreover, the following equivalences hold:
\begin{eq}\label{eq_rel_new}
xax^2\approx -x^2ax,   
\end{eq}\vspace{-5mm}
\begin{eq}\label{eq_rel6}
x^i a x b x \approx 0,\quad x a x^i b x \approx 0,\quad  x a x b x^i \approx 0
\end{eq}%
for $i=2,3$,
\begin{eq}\label{eq_rel_L1}
x a x b x c x\approx 0.   
\end{eq}%
\end{lemma}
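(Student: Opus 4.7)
The plan is to reduce everything to the Cayley--Hamilton identities $T_{\un{\theta}}(\cdots)=0$ together with the already-established consequences~\Ref{eq_rel2}--\Ref{eq_rel4}.

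I would first dispatch~\Ref{eq_rel5} and~\Ref{eq_rel_new} by direct manipulation. The first identity of~\Ref{eq_rel5} rewrites as $x^3 a(xbx^2+x^2bx)=0$, and $T_{31}(x,b)=0$ turns the parenthesised factor into $-(x^3b+bx^3)$, after which both $x^3ax^3b$ and $x^3(ab)x^3$ vanish by~\Ref{eq_rel4}. The second identity of~\Ref{eq_rel5} uses $T_{31}(x,ax^3b)=0$: its two extremal summands contain the killed pattern $x^3(\cdot)x^3$, leaving $xax^3bx^2=-x^2ax^3bx$, after which~\Ref{eq_rel3} rewrites $x^2ax^3=-x^3ax^2$ to conclude. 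Equivalence~\Ref{eq_rel_new} is immediate from $T_{31}(x,a)=0$, which reads $xax^2+x^2ax=-(x^3a+ax^3)$, where the right-hand side has $\#\pwr_x=1<2$, exactly what the definition of $\approx$ requires.

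The heart of the proof is~\Ref{eq_rel6}. The idea is to assemble, at each relevant $\deg_x$, three $T_{31}$-type identities whose $\#\pwr_x=3$ content forms a cyclic system in the three target words, and then to solve it using $p\neq 2$. For $i=2$ the three chosen identities would be $T_{31}(x,axb)=0$, $T_{31}(x,a)\cdot bx=0$, and $xa\cdot T_{31}(x,b)=0$; after discarding the tails with $\#\pwr_x\leq 2$ they respectively produce the pair equivalences $x^2axbx+xaxbx^2\approx 0$, $x^2axbx+xax^2bx\approx 0$, and $xax^2bx+xaxbx^2\approx 0$. The alternating combination (first)$-$(second)$+$(third) equals $2\cdot xaxbx^2\approx 0$, so $p\neq 2$ gives $xaxbx^2\approx 0$, and the other two targets then follow by back-substitution. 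For $i=3$ I would first run the same bootstrap on the auxiliary $\deg_x=5$ words $x^2ax^2bx$, $x^2axbx^2$, $xax^2bx^2$ using $T_{31}(x,ax^2b)=0$, $T_{31}(x,a)\cdot bx^2=0$, and $x^2a\cdot T_{31}(x,b)=0$, which clears all three auxiliaries; the three exact equalities obtained from $T_{31}(x,axbx)=0$, $xax\cdot T_{31}(x,b)=0$, and $T_{31}(x,a)\cdot xbx=0$ then collapse, modulo the cleared auxiliaries, into a second cyclic system on $x^3axbx$, $xax^3bx$, $xaxbx^3$, and the same alternating-sum trick with $p\neq 2$ finishes.

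For~\Ref{eq_rel_L1} I would apply the three identities $T_{211}(x,axb,xc)=0$, $T_{211}(x,bxc,xa)=0$, and $T_{211}(x,cxa,xb)=0$. In each of them the only summands with $\#\pwr_x=4$ are the two pieces $ABCA$ and $CABA$ of the $T_{211}$ template, which produce a pair of cyclic rotations of $xaxbxcx$. Chaining the three resulting equivalences around the triangle $(a,b,c)\to(b,c,a)\to(c,a,b)$ yields $xaxbxcx\approx -xaxbxcx$, so $2\cdot xaxbxcx\approx 0$, and the claim follows from $p\neq 2$. The main obstacle throughout is the bookkeeping in~\Ref{eq_rel6}: one must pick precisely the triples of $T_{\un{\theta}}$-identities whose $\#\pwr_x=3$ residues close up into a cyclically symmetric system, and bootstrap cleanly from $\deg_x=4$ up through $\deg_x=5$, with $p\neq 2$ used essentially at every final $2u\approx 0$ step.
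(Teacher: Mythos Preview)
Your proof is correct. For~\Ref{eq_rel5}, \Ref{eq_rel_new}, the case $i=2$ of~\Ref{eq_rel6}, and~\Ref{eq_rel_L1} your argument coincides with the paper's (in~\Ref{eq_rel_L1} you choose different $T_{211}$-instances---$T_{211}(x,axb,xc)$ etc.\ rather than $T_{211}(x,a,bxc)\cdot x$ etc.---but the resulting three-term cyclic system and the $2u\approx 0$ endgame are structurally identical). The one genuine divergence is the case $i=3$ of~\Ref{eq_rel6}. The paper uses $T_{211}(x,a,x^3b)=0$ and $x^3\, T_{211}(x,a,b)=0$ to obtain $x^3axbx\approx xax^3bx$ directly, invokes the read-backwards involution of Remark~\ref{remark_back} for the third link $xaxbx^3$, and then combines~\Ref{eq_rel2} with $x^2a\,T_{31}(x,b)=0$ to reach $4\cdot x^3axbx\approx 0$. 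Your route stays entirely inside $T_{31}$: a first pass on $T_{31}(x,ax^2b)$, $T_{31}(x,a)\cdot bx^2$, $x^2a\cdot T_{31}(x,b)$ clears the auxiliary words $x^2ax^2bx$, $x^2axbx^2$, $xax^2bx^2$ via a $2u\approx 0$ argument, and a second pass on $T_{31}(x,axbx)$, $xax\cdot T_{31}(x,b)$, $T_{31}(x,a)\cdot xbx$ then yields the clean pairwise system $u+w\approx 0$, $v+w\approx 0$, $u+v\approx 0$ in the three target words. Your argument is a step longer but more uniform (only $T_{31}$, no $T_{211}$, no appeal to the inversion symmetry); the paper's is shorter but mixes in $T_{211}$ and Remark~\ref{remark_back}.
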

\begin{proof} We have 
$$x^3aT_{31}(x,b)= x^3 a x^3 b + x^3 a x^2 b x + x^3 a x b x^2 + x^3 ab x^3 =0$$
in $N_{4,d}$. By equality~\Ref{eq_rel4}, $x^3 a x b x^2 =  - x^3 a x^2 b x$ in $N_{4,d}$. Similarly we can see that
$$T_{31}(x, a x^3 b) = x^3 a x^3 b + x^2 a x^3 b x + x a x^3 b x^2 + a x^3 b x^3  = 
 x^2 a x^3 b x + x a x^3 b x^2  = 0$$
in $N_{4,d}$. By~\Ref{eq_rel3}, $x^2 a x^3 b x = -x^3 a x^2 b x$ in $N_{4,d}$ and equalities~\Ref{eq_rel5} are proven.

Since $T_{31}(x,a)=0$ in $N_{4,d}$, equivalence~\Ref{eq_rel_new} is proven.

Let $i=2$. By~\Ref{eq_rel_new}, $x a x b x^2  \approx - x a x^2 b x \approx x^2 a x b x$. 
On the other hand, \Ref{eq_rel_new} implies $x a x b x^2  \approx -x^2 a x b x$. Equivalences~\Ref{eq_rel6} for $i=2$ are proven.

Let $i=3$. Since $T_{211}(x,a,x^3b)=0$ and $x^3 T_{211}(x,a,b)=0$ in $N_{4,d}$, we have
$$xax^3bx + x^3 bxax\approx0 \quad \text{and}\quad 
x^3axbx + x^3bxax\approx 0,$$
respectively. Thus, $x^3axbx \approx xax^3bx$. Using Remark~\ref{remark_back}, we obtain 
\begin{eq}\label{eq_star1}
x^3axbx \approx xax^3bx\approx xaxbx^3.
\end{eq}%
The equality $x^2aT_{31}(x,a)=0$ implies 
$$x^2 axb x^2 + x^2 a x^2 bx \approx0.$$
Applying relation~\Ref{eq_rel2}, we obtain 
$$x^3axbx + xaxbx^3 + x^3axbx + xax^3bx\approx 0.$$ 
Equivalences~\Ref{eq_star1} complete the proof of~\Ref{eq_rel6}.

Since $T_{211}(x,a,bxc)x=0$ and $T_{211}(x,a,b)xcx=0$ in $N_{4,d}$, we obtain $$xaxbxcx  + xbxcxax \approx 0\quad \text{and}\quad xaxbxcx + xbxaxcx \approx 0,$$ respectively. The equality $x b T_{211}(x,a,c)x=0$ in $N_{4,d}$ implies 
$$xbxcxax + xbxaxcx   \approx0,$$ 
and therefore $xaxbxcx\approx0$. 
\end{proof}

If $\un{\al}\in\NN^r$, $\un{\be}\in\NN^s$, then we write $\un{\al}\subset \un{\be}$ and say that $\un{\al}$ is a subvector of $\un{\be}$ if there are $1\leq i_1<\cdots <i_r$ such that $\al_{1}=\be_{i_1},\ldots,\al_{r}=\be_{i_r}$. 

\begin{lemma}\label{lemma_canonical_form}
If $f\in\M_{\FF}$, then $f=0$ in $N_{4,d}$ or $f=\sum_i\al_i a_i$ in $N_{4,d}$ for some  $\al_i\in\FF^{\ast}$, $a_i\in\M$ such that for every letter $x$ $\pwr_x(a_i)$ belongs to the following list:
\begin{enumerate}
\item[$\bullet$] $\emptyset$, $(1)$, $(1,1)$, $(1,1,1)$,

\item[$\bullet$] $(2)$, $(2,1)$,

\item[$\bullet$] $(3)$, $(3,1)$, $(1,3)$, $(3,2)$, $(3,2,1)$.
\end{enumerate} 
Moreover, we can assume that for all pairwise different letters $x,y,z$ and all $i$ the following conditions do not hold:   
\begin{enumerate}
\item[a)] $\pwr_x(a_i)=(3,2,1)$ and $(3)\subset \pwr_y(a_i)$; 

\item[b)] $(3)$ is a subvector of $\pwr_x(a_i)$, $\pwr_y(a_i)$, and $\pwr_z(a_i)$;

\item[c)] $(3,2)$ is a subvector of $\pwr_x(a_i)$ and $\pwr_y(a_i)$. 
\end{enumerate}
\end{lemma}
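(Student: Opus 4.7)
The plan is to prove the lemma by induction on a well-order refining the partial order $\succ$ on the multiset $\{\pwr_y(a) : y \text{ a letter}\}$ of a monomial $a\in\M$. By linearity in $\M_\FF$ it suffices to handle a single monomial $a$. If $a$ already satisfies the conclusion, we are done; otherwise I will exhibit an identity $a = \sum_j \beta_j b_j$ in $N_{4,d}$ with every $b_j$ strictly smaller in the chosen order, whereupon the induction hypothesis finishes the job on each $b_j$.

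First I would normalize $\pwr_x(a)$ one letter at a time. The Nagata--Higman identity $x^4 = 0$ forces every entry of $\pwr_x(a)$ into $\{1,2,3\}$. Relation \Ref{eq_rel4} says that two $3$'s in $\pwr_x$ kill $a$ outright, so at most one $3$ occurs. Relation \Ref{eq_rel3} exchanges a consecutive $(2,3)$-sub-pattern with $-(3,2)$, and equivalence \Ref{eq_rel_new} turns $(1,2)$ into $-(2,1)$ modulo strictly smaller $\#\pwr_x$. Equivalences \Ref{eq_rel6} and \Ref{eq_rel_L1} annihilate length-$3$ patterns with exactly one entry $\neq 1$ and the length-$4$ pattern $(1,1,1,1)$. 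The remaining problematic patterns $(2,2)$, $(2,2,1)$, $(2,1,2)$, $(1,2,2)$, $(2,2,2)$, $\ldots$ are handled by multiplying $T_{31}(x,a)=0$ on the right by $x^2$, $xbx$, $x^2bx^2$, and so on; for example, $T_{31}(x,a)\cdot x^2=0$ yields $x^2 a x^2 = -x^3 a x - x a x^3$ in $N_{4,d}$, and an analogous computation gives $x^2 a x^2 b x^2 = 0$. Together these rewrites bring $\pwr_x(a)$ into the allowed list.

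For the cross-letter constraints (a), (b), (c), the strategy is to combine a $T_{31}$-relation in one of the offending letters with the reductions above. Under (a) the monomial has the form $u_0 x^3 u_1 x^2 u_2 x u_3$ with $y^3$ inside some $u_i$; applying $T_{31}(y,\cdot)$ to that $y^3$-factor, then using \Ref{eq_rel5} to freely interchange the $(3,2,1)$-, $(3,1,2)$-, and $(1,3,2)$-patterns in $x$, rewrites $a$ as a combination of monomials in which either $\pwr_x$ has exited $(3,2,1)$ or $\pwr_y$ no longer contains $(3)$ as a subvector. Conditions (b) and (c) are treated in the same spirit: a $T_{31}$ applied at one of the "bad" letters against its $(3)$- or $(3,2)$-block, followed by the step-one rewrites in the remaining letters, strictly reduces the induction measure.

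The main obstacle is combinatorial bookkeeping rather than any deep identity. For each pattern outside the allowed list, and for each of the three cross-letter configurations, one must verify that the chosen rewrite strictly decreases the induction measure and does not reintroduce a forbidden pattern in another letter. The cleanest measure is the multiset $\{\pwr_y(a)^{\mathrm{ord}}\}_y$ totally ordered by a refinement of $>$; with this measure every identity above reduces the measure strictly, but the case check is lengthy because there are many distinct starting patterns and, for (a)--(c), the auxiliary $y^3$- or $(3,2)_y$-factor may sit in any of several positions relative to the $x$-pattern, each requiring its own small derivation from the $T_\theta$-identities and \Ref{eq_rel_new}--\Ref{eq_rel_L1}.
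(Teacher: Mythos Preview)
Your treatment of the single-letter normalization (the first part of the lemma) is essentially the paper's argument, modulo one computational slip: $T_{31}(x,a)\cdot x^2=0$ yields $x^3ax^2+x^2ax^3=0$ (that is relation~\Ref{eq_rel3}), not the $(2,2)\to(3,1)\!+\!(1,3)$ rewrite you want. The correct source of $x^2ax^2=-x^3ax-xax^3$ is $x\cdot T_{31}(x,a)=0$, which is exactly relation~\Ref{eq_rel2}. With that fix, your reduction of each $\pwr_x$ to the allowed list proceeds as in the paper; one should also note (as the paper does) that these rewrites in $x$ leave $\pwr_y$ untouched for $y\neq x$, so processing the letters one by one is legitimate.

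The second part, however, has a real gap. Conditions (a)--(c) cannot be handled by your $T_{31}$-plus-reorder strategy, because the offending monomials are already in single-letter normal form and your rewrites do not improve the measure. Take the simplest instance of (b): $a=u_0x^3u_1$ with $\pwr_x(a)=\pwr_y(a)=\pwr_z(a)=(3)$. Applying $T_{31}(x,u_1)=0$ gives
\[
a=-u_0x^2u_1x-u_0xu_1x^2-u_0u_1x^3,
\]
and in each summand either $\pwr_x$ is unchanged (the last term) or $\#\pwr_x$ has \emph{increased} to~$2$ (the first two), while $\pwr_y,\pwr_z$ are untouched; none of these is $\succ a$, so no reduction in your order has occurred. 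The same obstruction appears in (a) and (c). What is missing is the identity
\[
x^{3}\,b\,y^{3}=0\quad\text{in }N_{4,d}
\]
for all $b$, which is the $n=4$ case of~\Ref{eq_wellknown} (valid here because $p\neq 2$, so $4\in\FF^\ast$). Once you have this, any monomial that contains cubes of two distinct letters is zero outright, and (a), (b), (c) follow immediately (for (a), first use~\Ref{eq_rel3} and~\Ref{eq_rel5} if you want to place the $x^3$ block conveniently). This is precisely how the paper dispatches the second part.
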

\begin{proof}  Let $x$ be a letter and $f=\sum_{j\in J} \be_j b_j$ for $\be_j\in\FF^{\ast}$ and $b_j\in\M$. We claim that the statement of the lemma holds for $f$ for the given letter $x$.  To prove the claim we use induction on $k=\max\{\#\pwr_x(b_j)\,|\,j\in J\}$.

If $k=0,1$, then the claim holds. 

If $b_j=b_{1j}x^2 b_{2j} x^2 b_{3j}$ for some $b_{1j},b_{2j},b_{3j}\in\M^{\neg x}$, then $a_j=-b_{1j}x^3 b_{2j} x b_{3j} - b_{1j}x b_{2j} x^3 b_{3j}$ in $N_{4,d}$ by relation~\Ref{eq_rel2}. Note that $\#\pwr_x(b_j) = \#\pwr_x(b_{1j}x^3 b_{2j} x b_{3j}) =\#\pwr_x(b_{1j}x b_{2j} x^3 b_{3j})$. Moreover, if $(2,\ldots,2)\subset\pwr_x(b_j)$, then we apply~\Ref{eq_rel2} several times. Therefore, without loss of generality can assume that $(2,2)$ is not a subvector of $\pwr_x(b_j)$ for all $j$.

If one of the vectors
$$(r), r>3;\; (3,3);\;(s,1,1), \;(1,s,1), \;(1,1,s),  s\in\{2,3\};\; (1,1,1,1)$$
is a subvector of $\pwr_x(b_j)$, then $b_j\approx0$ by the equality $x^4=0$ in $N_{4,d}$ and formulas~\Ref{eq_rel4},~\Ref{eq_rel6},~\Ref{eq_rel_L1}, respectively. Thus, $f\approx 0$ or $f\approx \sum_{j\in J_0}\be_j b_j$ for such $J_0\subset J$ that for every $j\in J_0$ the vector $\pwr_x(b_j)$ up to permutation of its entries belongs to the following list:
$$\emptyset,\, (1),\, (1,1),\, (1,1,1),\, (2),\, (2,1),\,  
(3),\, (3,1),\,  (3,2),\, (3,2,1).$$%
Let $j\in J_0$. If $\pwr_x(b_j)=(\si(1),\si(2),\si(3))$ for some $\si\in S_3$, then applying relations~\Ref{eq_rel3} and~\Ref{eq_rel5} we obtain that $b_j=\pm c_j$ in $N_{4,d}$ for a monomial $c_j\in\M$ satisfying $\pwr_x(c_j)=(3,2,1)$. If $\pwr_x(b_j)$ is $(1,2)$ or $(2,3)$, then we apply formulas~\Ref{eq_rel_new} or~\Ref{eq_rel3}, respectively, to obtain that $b_j \approx -c_j$ for a monomial $c_j\in\M$ with $\pwr_x(c_j)\in\{(2,1),\,(3,2)\}$.  So we get that $f\approx h$ for such $h\in\M_{\FF}$ that the claim holds for $h$. The induction hypothesis and Definition~\ref{def_approx} complete the proof of the claim.

Let $y$ be a letter different from $x$. Relations from the proof of the claim do not affect $y$-powers.  Therefore, applying the claim to $f$ for all letters subsequently, we complete the proof of the first part of the lemma. 

Consider an $a\in\M$. If $a$ satisfies condition~a), then relations~\Ref{eq_rel3} and~\Ref{eq_rel5} together with relation~\Ref{eq_wellknown} imply that $a=0$ in $N_{4,d}$. If $a$ satisfies condition~b) or~c), then relations~\Ref{eq_wellknown} and~\Ref{eq_rel3} imply that $a=0$ in $N_{4,d}$. Thus, the second part of the lemma is proven.
\end{proof}

The following lemma resembles Lemma~3.3 from~\cite{Lopatin_O3}.

\begin{lemma}\label{lemma_pi}
Let $p=2$ and $1\leq k\leq d$. For every homogeneous $f\in\M_{\FF}$ of multidegree $(\theta_1,\ldots,\theta_d)$ with $\theta_k\leq 3$ and $\theta_1+\cdots+\theta_{k-1}+\theta_{k+1}+\cdots+\theta_d>0$  we define $\pi_k(f)\in\M_{\FF}$ as the result of the substitution $x_k\to 1$ in $a$, where $1$ stands for the unity of $\M_1$. 

Then $f=0$ in $N_{4,d}$ implies $\pi_k(f)=0$ in $N_{4,d}$.   
\end{lemma}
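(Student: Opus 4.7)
The plan is to exploit the explicit description of the T-ideal of identities of $N_{4,d}$. Since $\FF$ is infinite and $f=0$ in $N_{4,d}$, a standard Vandermonde argument allows us to write
$$f=\sum_{\alpha}g_{\alpha}\,T_{\lambda^{\alpha}}(a^{\alpha}_1,\ldots,a^{\alpha}_{r_{\alpha}})\,h_{\alpha},$$
with $g_{\alpha},h_{\alpha}\in\M_1$, $a^{\alpha}_i\in\M$, $|\lambda^{\alpha}|=4$, and (after extracting multihomogeneous components) each summand of multidegree $(\theta_1,\ldots,\theta_d)$. The substitution $\pi_k$ extends to a unital homomorphism of the free algebra sending $x_k\mapsto 1$, hence commutes with the $T_{\lambda}$'s: $\pi_k(g_{\alpha}\,T_{\lambda^{\alpha}}(\un{a}^{\alpha})\,h_{\alpha})=\pi_k(g_{\alpha})\,T_{\lambda^{\alpha}}(\pi_k(\un{a}^{\alpha}))\,\pi_k(h_{\alpha})$. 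The key observation is that $\pi_k(a^{\alpha}_i)=1$ precisely when $a^{\alpha}_i=x_k^{e_i}$ is a pure power of $x_k$, and otherwise $\pi_k(a^{\alpha}_i)\in\M$.

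Then I would split by $J_{\alpha}=\{i\,|\,a^{\alpha}_i=x_k^{e_i}\}$. When $J_{\alpha}=\emptyset$, $T_{\lambda^{\alpha}}(\pi_k(\un{a}^{\alpha}))$ is a $T_{\lambda}$-identity of $N_{4,d}$ evaluated on words in $\M$ and vanishes in $N_{4,d}$. The case $J_{\alpha}=\{1,\ldots,r_{\alpha}\}$ cannot arise, because then the $x_k$-degree of the summand would be at least $\sum_i\lambda^{\alpha}_i=4>\theta_k$. In the remaining mixed case, iterating the identity $T_{\lambda}(\ldots,1_{\text{at position }j},\ldots)=\binom{|\lambda|}{\lambda_j}\,T_{\lambda\setminus j}(\ldots)$, obtained by pulling the scalar $\alpha_j$ out of $(\sum_i\alpha_i a_i)^4$ when $a_j=1$, yields
$$T_{\lambda^{\alpha}}(\pi_k(\un{a}^{\alpha}))=\binom{4}{\lambda^{\alpha}_{j_1},\ldots,\lambda^{\alpha}_{j_s},\,4-\textstyle\sum_l\lambda^{\alpha}_{j_l}}\,T_{\lambda^{\alpha}|_{J_{\alpha}^c}}\bigl(\pi_k(\un{a}^{\alpha})|_{J_{\alpha}^c}\bigr),$$
with every part of the multinomial coefficient at least $1$. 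For $n=4$ the only such coefficients are $\binom{4}{1,3}=4$, $\binom{4}{2,2}=6$, $\binom{4}{1,1,2}=12$, and $\binom{4}{1,1,1,1}=24$; all are even, hence vanish in characteristic $2$, so the summand contributes zero.

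The hard part is noticing that \emph{all} the relevant multinomial coefficients happen to be even; this is very special to $n=4$, and it is precisely where both hypotheses enter. Without $\theta_k\leq 3$ the summand $T_4(x_k)=x_k^4$ with $\pi_k(x_k^4)=1\neq 0$ would be permitted, and without $p=2$ the coefficients $4,6,12,24$ would not vanish. Everything else reduces to routine manipulation within the T-ideal of $N_{4,d}$.
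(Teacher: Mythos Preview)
Your argument is correct and follows essentially the same route as the paper: both reduce to writing $f$ as an $\FF$-linear combination of $g_\alpha T_{\lambda^\alpha}(\un{a}^\alpha)h_\alpha$ with $|\lambda^\alpha|=4$ and each summand of multidegree $\theta$, and then checking that $\pi_k$ annihilates each summand in $N_{4,d}$. The paper simply asserts that $\pi_k(T_{31}(a,b))$, $\pi_k(T_{22}(a,b))$, $\pi_k(T_{211}(a,b,c))$, and $\pi_k(T_{1^4}(a,b,c,u))$ vanish in $N_{4,d}$ ``by straightforward calculations''; your case split on $J_\alpha$ together with the identity $T_\lambda(\ldots,1,\ldots)=\binom{|\lambda|}{\lambda_j}T_{\lambda\setminus j}(\ldots)$ is exactly a uniform way to carry out those calculations, and it makes transparent why $p=2$ and $\theta_k\le 3$ are needed (the relevant multinomial coefficients $4,6,12,24$ are all even, and the ``all arguments are powers of $x_k$'' case is ruled out by the degree bound).
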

\begin{proof} Let $a,b,c,u\in\M$. By definition, $\pi_k(ab)=\pi_k(a)\pi_k(b)$. 
Then by straightforward calculations we can show that 
$\pi_k(T_{31}(a,b))=0$, $\pi_k(T_{211}(a,b,c))=0$, $\pi_k(T_{22}(a,b))=0$, and $\pi_k(T_{1^4}(a,b,c,u))=0$ in $N_{4,d}$. The proof is completed.  
\end{proof}

We now can prove Theorem~\ref{theo_n_is_4}:
\begin{proof}
If $p=0$, then the required was proven by Vaughan--Lee in~\cite{Vaughan93}. If $p>3$, then the claim follows from Kuzmin's low bound (see Section~\ref{section_intro}) and Lemma~\ref{lemma_NH2}. 

Let $p=2$ and $a=x_1^3\cdots x_{d}^3$. Assume that $a=0$ in $N_{4,d}$. Applying $\pi_1,\ldots,\pi_{d-1}$ from Lemma~\ref{lemma_pi} to $a$ we obtain that $x_d^3=0$ in $N_{4,d}$; a contradiction. Thus, $C_{4,d}>\deg{a}=3d$. 

Assume that $p=3$. Consider an $a\in\M$ such that $a\neq0$ in $N_{4,d}$. Applying Lemma~\ref{lemma_canonical_form} to $a$, without loss of generality we can assume that $a$  satisfies all conditions from Lemma~\ref{lemma_canonical_form}. Denote $t_i=\deg_{x_i}(a)$ and $r=\#\{i\,|\,(3)\text{ is  subvector of }\pwr_{x_i}(a)\}$. Then
\begin{enumerate}
\item[a)] $t_i\leq 6$;

\item[b)] if $t_i\geq 4$, then $(3)\subset\pwr_{x_i}(a)$
\end{enumerate}
for all $1\leq i\leq d$.

If $r=0$, then $\deg(a)\leq 3d$ by part~b). If $r=1$, then $\deg(a)\leq 6 + 3(d-1)=3d+3$ by parts~a) and~b). 

Let $r=2$. Then without loss of generality we can assume that $(3)$ is a subvector of $\pwr_{x_1}(a)$ and $\pwr_{x_2}(a)$. Since condition~a) of Lemma~\ref{lemma_canonical_form} does not hold for $a$, $(3,2,1)$ is not a subvector of $\pwr_{x_i}(a)$ for $i=1,2$. Hence, $t_1,t_2<6$. If $t_1=t_2=5$, then condition~c) of Lemma~\ref{lemma_canonical_form} holds for $a$; a contradiction. Therefore, $t_1+t_2\leq 9$. By part~b), $t_i\leq 3$ for $3\leq i\leq d$. Finally, we obtain that $\deg(a)\leq 3d+3$.

If $r\geq3$, then $a$ satisfies condition~b) of Lemma~\ref{lemma_canonical_form}; a contradiction. 

So, we have shown that $\deg(a)\leq 3d+3$, and therefore $C_{4,d}\leq 3d+4$. On the other hand, $C_{4,d}\geq C_{3,d}=3d+1$ by~\cite{Lopatin_Comm2}. The proof is completed.
\end{proof}

\begin{remark}\label{remark_n_is_4} Assume that $n=4$ and $p=3$. Let us compare the upper bound $C_{4,d}\leq 3d+3$ from Theorem~\ref{theo_n_is_4} with the known upper bounds on $C_{4,d}$:
\begin{enumerate}
\item[$\bullet$] Corollary~\ref{cor1a} implies that $C_{4,d}<8d+1$;

\item[$\bullet$] bounds by Belov and Kharitonov~\cite{Belov2011} imply that $C_{4,d}\leq B_{4}d$, where $B_4>10^{20}$ (see Remark~\ref{remark_Belov} for details);

\item[$\bullet$] bounds by Klein~\cite{Klein00} imply that $C_{4,d}<\frac{2^{11}}{3}d^4$ and $C_{4,d}<2^{128}d^2$ (see Section~\ref{section_intro} for details). 
\end{enumerate}
\end{remark}

\section{$GL(n)$-invariants of matrices}\label{section_matrix_inv}
 
The general linear group $GL(n)$ acts on $d$-tuples 
$V=(\mathbb{F}^{n\times n})^{\oplus d}$
of $n\times n$ matrices over $\mathbb{F}$ by the
diagonal conjugation, i.e.,
\begin{equation}\label{eq_diag_conj}
g\cdot (A_1,\ldots,A_d)=(g A_1 g^{-1},\ldots,g A_d g^{-1}),
\end{equation}%
where $g\in GL(n)$ and $A_1,\ldots,A_d$ lie in $\mathbb{F}^{n\times n}$. The coordinate algebra of the affine variety $V$ is the algebra of polynomials $R=\mathbb{F}[V]=\mathbb{F}[x_{ij}(k)\,|\,1\leq i,j\leq n,\, 1\leq k\leq d]$ 
in $n^2d$ variables. Denote by
$$X_k=\left(\begin{array}{ccc}
x_{11}(k) & \cdots & x_{1n}(k)\\
\vdots & & \vdots \\
x_{n1}(k) & \cdots & x_{nn}(k)\\
\end{array}
\right)
$$%
the $k^{\rm th}\!$ {\it generic} matrix. The action of $GL(n)$ on $V$ induces the action on $R$ as follows: 
$$g\cdot x_{ij}(k)= (i,j)^{\rm th}\text{ entry of }g^{-1}X_k g$$%
for all $g\in GL(n)$.  The algebra of {\it $GL(n)$-invariants of matrices} is
$$R^{GL(n)}=\{f\in \mathbb{F}[V]\,|\,g\cdot f=f\;{\rm for\; all}\;g\in GL(n)\}.$$

Denote coefficients in the characteristic polynomial
of an $n\times n$ matrix $X$ by $\sigma_t(X)$, i.e., %
\begin{equation}\label{eq1_intro} 
\det(X+\lambda E)=\sum_{t=0}^{n} \lambda^{n-t}\sigma_t(X).
\end{equation}%
In particular, $\sigma_0(X)=1$, $\sigma_1(X)=\tr(X)$, and $\sigma_n(X)=\det(X)$. 

Given $a=x_{i_1}\cdots x_{i_r}\in\M$,  we set $X_{a}=X_{i_1}\cdots X_{i_r}$. It is known that the algebra $R^{GL(n)}\subset R$ is generated over $\FF$ by $\sigma_t(X_a)$, where $1\leq t\leq n$ and $a\in\M$ (see~\cite{Donkin92a}). Note that in the case of $p=0$ the algebra $R^{GL(n)}$ is generated by  $\tr(X_a)$, where $a\in\M$. Relations between the mentioned generators were established in~\cite{Zubkov96}. 

\begin{remark}\label{remark_G} If $G$ belongs to the list $O(n)$, $S\!p(n)$, $SO(n)$, $SL(n)$, then we can define the algebra of invariants $R^{G}$ in the same way as for $G=GL(n)$.  A generating set for the algebra $R^G$ is known, where we assume that $\Char{\mathbb{F}}\neq2$ in the case of $O(n)$ and $SO(n)$ (see~\cite{Zubkov99},~\cite{Lopatin_so_inv}). In case $p=0$ and $G\neq SO(n)$ relations between generators of $R^{G}$ were described in~\cite{Procesi76}. In case $p\neq2$ relations for $R^{O(n)}$ were described in~\cite{Lopatin_Orel},~\cite{Lopatin_free_rel}. 
\end{remark}
\bigskip


By the Hilbert--Nagata Theorem on invariants, $R^{GL(n)}$ is a finitely generated
$\NN_0$-graded algebra by degrees, where $\deg{\si_t(X_a)}=t\deg{a}$ for $a\in\M$. But the above mentioned generating set is not finite. In~\cite{Domokos02} the following finite generating set for $R^{GL(n)}$ was established:
\begin{enumerate}
\item[$\bullet$] $\si_t(X_a)$, where $1\leq t\leq \frac{n}{2}$, $a\in\M$, $\deg{a}\leq C_{n,d}$; 

\item[$\bullet$] $\si_t(X_i)$, where $\frac{n}{2}<t\leq n$, $1\leq i\leq d$.
\end{enumerate}
We obtain a smaller generating set.

\begin{theo}\label{theo_gen_system}
The algebra $R^{GL(n)}$ is generated by the following finite set:
\begin{enumerate}
\item[$\bullet$] $\si_t(X_a)$, where $t=1$ or $p\leq t\leq \frac{n}{2}$, $a\in\M$, $\deg{a}\leq C_{[n/t],d}$; 

\item[$\bullet$] $\si_t(X_i)$, where $\frac{n}{2}< t\leq n$, $p\leq t$, $1\leq i\leq d$.
\end{enumerate}
\end{theo}
\bigskip

To prove the theorem,  we need the following notions. Let $1\leq t\leq n$. For short, we write $\si_t(a)$ for $\si_t(X_a)$, where $a\in \M$.  Amitsur's formula~\cite{Amitsur_1980} enables us to consider $\si_t(a)$ with $a\in \M_{\FF}$ as an invariant from $R^{GL(n)}$ for all $t\in\NN$. Zubkov~\cite{Zubkov96} established that the ideal of relations for  $R^{GL(n)}$ is generated by $\si_t(a)=0$, where $t>n$ and $a\in\M_{\FF}$. More details can be found, for example, in~\cite{Lopatin_Orel}. Denote by $I(t)$ the $\FF$-span of elements $\si_{t_1}(a_1)\cdots \si_{t_r}(a_r)$, where $r>0$, $1\leq t_1,\ldots, t_r\leq t$, and $a_1,\ldots,a_r\in\M$. For short, we write $I$ for $I(n)=R^{GL(n)}$. Denote by $I^{+}$ the subalgebra generated by $\NN_0$-homogeneous elements of $I$ of positive degree. Obviously, the algebra $I$ is generated by a set $\{f_k\} \subset I$~if and only if $\{\ov{f_k}\}$ is a basis of $\ov{I}={I}/{(I^{+})^2}$. Given an $f\in I$, we write $f\equiv0$ if $\ov{f}=0$ in $\overline{I}$, i.e., $f$ is equal to a polynomial in elements of strictly lower degree.

\begin{proof} Let $1\leq t\leq n$, $m=[n/t]$, and $a,b\in\M_{\FF}$. We claim that
\begin{eq}\label{eq_claim2}
\text{there exists an }f\in I(t-1) \text{ such that }\si_t(ab^m)\equiv f.
\end{eq}%
To prove the claim we notice that the inequality $(m+1)t>n$ and the description of relations for $R^{GL(n)}$ imply $\si_{(m+1)t}(a+b)=0$.  Taking homogeneous component of degree $t$ with respect to $a$ and degree $mt$ with respect to $b$, we obtain that $\si_t(ab^m)\equiv0$ or $\si_t(ab^m)\equiv \sum_i \al_i\si_{t_i}(a_i)$, where $\al_i\in\FF^{\ast}$, $1\leq t_i<t$, and $a_i$ is a monomial in $a$ and $b$ for all $i$. By Amitsur's formula, $\si_{t_i}(a_i)\equiv\sum_j \be_{ij}\si_{r_{ij}}(b_{ij})$ for some $\be_{ij}\in\FF^{\ast}$, $1\leq r_{ij}\leq t_i$,  $b_{ij}\in\M$. Thus, $\sum_i \al_i\si_{t_i}(a_i)\in I(t-1)$ and the claim is proven. 

Consider a monomial $c\in\M$ satisfying $\deg{c}>C_{m,d}$. Then $c=c'x$ for some letter $x$ and $c'\in\M$. Since $c'=0$ in $N_{m,d}$, we have $c'=\sum_i\ga_i u_i v_i^m w_i$ for some $u_i,w_i\in\M_1$, $v_i\in\M_{\FF}$, $\ga_i\in\FF$. Thus 
$\si_t(c)=\si_t(\sum_i \al_i u_i v_i^m w_i x)$. Applying Amitsur's formula, we obtain that $\si_t(c)-\sum_i\al_i^t\si_t(u_i v_i^m w_i x)\in I(t-1)$. Statement~\Ref{eq_claim2} implies \begin{eq}\label{eq3}
\si_t(c)\equiv h \text{ for some }h\in I(t-1).
\end{eq}%
Consecutively applying~\Ref{eq3} to $t=n,n-1,\ldots,2$ we obtain that $R^{GL(n)}$ is generated by $\si_t(a)$, where $1\leq t\leq n$, $a\in\M$, $\deg{a}\leq C_{[n/t],d}$.  Note that if $t>\frac{n}{2}$, then $m=1$ and $C_{m,d}=1$. If $t<p\leq n$, then the Newton formulas imply that $\si_t(a)$ is a polynomial in $\tr(a^i)$, $i>0$ (the explicit expression can be found, for example, in Lemma~10 of~\cite{Lopatin_bplp}). The last two remarks complete the proof.
\end{proof}

\begin{conj}\label{conj_inv}
The algebra $R^{GL(n)}$ is generated by elements of degree less or equal to $C_{n,d}$.
\end{conj}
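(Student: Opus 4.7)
The plan is to induct on the graded degree $D = t \deg(a)$ of a generator $\si_t(X_a)$ of $R^{GL(n)}$ taken from the finite list of Theorem~\ref{theo_gen_system}, reducing each generator with $D > C_{n,d}$ to a polynomial in generators of strictly smaller graded degree; generators with $D \le C_{n,d}$ require no treatment. By the inductive hypothesis, every generator of degree $< D$ is already expressible in terms of generators of degree $\le C_{n,d}$, so a successful reduction at level $D$ closes the induction.

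The two main tools are: (i) the nilpotency relation $a^t = 0$ in $N_{n,d}$, valid because $\deg(a^t) = D > C_{n,d}$, which --- after linearization over the infinite field $\FF$ --- yields a decomposition $a^t = \sum_i \al_i u_i v_i^n w_i$ in $\M_{\FF}$ with $v_i \in \M$; and (ii) the Cayley--Hamilton identity $X_v^n = \sum_{k=1}^{n}(-1)^{k-1}\si_k(X_v)\,X_v^{n-k}$ applied to the product $X_v$ of generic matrices. Combined and traced cyclically, they yield
$$
\tr(X_{a^t}) = \sum_{i,k}(-1)^{k-1}\al_i\,\si_k(X_{v_i})\,\tr(X_{w_iu_i}X_{v_i}^{n-k}),
$$
and every factor on the right-hand side has graded degree strictly less than $D$, save the borderline case $u_i = w_i = 1$ with $k = n$. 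In that case the only surviving factor is $\si_n(X_{v_i}) = \det(X_{v_i}) = \prod_j \det(X_{(v_i)_j})$, a product of degree-$n$ generators $\si_n(X_{(v_i)_j})$ by multiplicativity of the determinant, each of degree $n \le C_{n,d}$.

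In cases~(a) and~(c), i.e.\ $p = 0$ or $p > n$, Newton's identities express every $\si_t(X_a)$ as a polynomial in the traces $\tr(X_a^j) = \tr(X_{a^j})$ for $1 \le j \le t$. For $j < t$ the word $a^j$ has degree $j\deg(a) < D$ and falls within the inductive hypothesis; for $j = t$ one applies the displayed trace reduction to $\tr(X_{a^t})$ and recurses. Iterating, every trace encountered ultimately has argument of degree $\le C_{n,d}$, completing the proof in these two cases.

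The main obstacle is case~(b), $0 < p \le n$, where Newton's identities cannot recover $\si_t$ for $p \le t \le n$ from power sums (one cannot divide by a $t$ with $p\mid t$). Such a $\si_t(X_a)$ must then be reduced directly by Amitsur's formula, applied to $X_a = \sum_i \al_i X_{u_iv_i^nw_i}$ coming from tool~(i) after the Cayley--Hamilton substitutions of~(ii); this expands $\si_t(X_a)$ as a polynomial in the scalar factors $\si_k(X_{v_i})$ and the $\si_s$-invariants of shorter generic matrix products. Showing simultaneously that every factor of every term of this large expansion has graded degree strictly less than $D$ --- with the top $\si_n$ instances again absorbed by multiplicativity of the determinant --- is the combinatorial core of the argument. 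The case $t = n$ is immediately handled by multiplicativity, so the truly difficult range is the intermediate $p \le t < n$, and that is where I expect the bulk of the technical work to lie.
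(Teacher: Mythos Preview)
The statement you are attempting to prove is labeled a \emph{conjecture} in the paper and is not proved there in full generality. The paper's accompanying Remark reduces Conjecture~\ref{conj_inv}, via the generating set of Theorem~\ref{theo_gen_system}, to the purely numerical inequality $t\,C_{[n/t],d}\le C_{n,d}$ for all $p\le t\le n/2$, and records that this holds when $p=0$ or $p>n/2$ (and for $n\le5$); the range $0<p\le n/2$ is left open. Your case~(b) sketch does not close this gap --- you yourself say the bulk of the technical work remains --- so there is no ``paper's proof'' to compare against in that range, and the conjecture is genuinely open there.

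For the cases you do treat in detail there is also a real gap. When you write $a^t=\sum_i\al_iu_iv_i^nw_i$ ``with $v_i\in\M$'', membership in the $T$-ideal $\langle x^n\rangle$ only guarantees $v_i\in\M_\FF$: the partial linearizations $T_{\un\theta}(a_1,\ldots,a_r)$ that span this ideal are not of the form $v^n$ for a single monomial $v$. With $v_i\in\M_\FF$ the step ``$\det(X_{v_i})=\prod_j\det(X_{(v_i)_j})$ by multiplicativity of the determinant'' is meaningless, and your borderline term ($u_i=w_i=1$, $k=n$) is not absorbed. If one tries to repair this via Amitsur's formula for $\si_n$ of a sum, one is led back to invariants $\si_k(X_c)$ with $c$ a monomial and $k\deg c$ equal to the original degree, i.e.\ one is forced to re-derive the content of Theorem~\ref{theo_gen_system} anyway. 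By contrast, the paper's route for $p=0$ or $p>n/2$ is much shorter: Theorem~\ref{theo_gen_system} already lists generators of degree at most $t\,C_{[n/t],d}$ (or at most $n$), so one only has to check $t\,C_{[n/t],d}\le C_{n,d}$, which for $p>n/2$ is vacuous and for $p=0$ follows from Razmyslov's bound $C_{[n/t],d}\le[n/t]^2$ together with Kuzmin's bound $C_{n,d}\ge n(n+1)/2$.
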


\begin{remark}
Theorem~\ref{theo_gen_system} and the inequality $C_{n,d}\geq n$ imply that  to prove Conjecture~\ref{conj_inv} it is enough to show that 
$$t\,C_{[n/t],d}\leq C_{n,d}$$
for all $t$ satisfying $p\leq t\leq \frac{n}{2}$. Thus it is not difficult to see that Conjecture~\ref{conj_inv} holds for $n\leq5$. Moreover, as it was proven in~\cite{Domokos02} (and also follows from Theorem~\ref{theo_gen_system}), Conjecture~\ref{conj_inv} holds in case $p=0$ or $p>\frac{n}{2}$.
\end{remark}

\section*{Acknowledgements}
This paper was supported by RFFI 10-01-00383a and FAPESP No.~2011/51047-1. The author is grateful for this support.

\end{document}